\newcommand{\E}{\mathcal{E}}
\newcommand{\C}{\mathcal{C}}
\numberwithin{equation}{section}
\newtheorem{theorem}{Theorem}[section]
\newtheorem{corollary}[theorem]{Corollary}
\theoremstyle{definition}%% only works if amsthm package is used
\newtheorem{example}[theorem]{Example}
\newtheorem{definition}[theorem]{Definition}
\newtheorem{remark}[theorem]{Remark}
 \title{Magic Billiards: the Case of Elliptical Boundaries}
\author[1,3]{Vladimir Dragovi\'c}
\author[2,3,4]{Milena Radnovi\'c}
\affil[1]{\textsc{The University of Texas at Dallas, Department of Mathematical Sciences}}
\affil[2]{\textsc{The University of Sydney, School of Mathematics and Statistics}}
\affil[3]{\textsc{Mathematical Institute SANU, Belgrade}}
\affil[4]{\textsc{UNSW Sydney}}
\affil[ ]{\texttt{vladimir.dragovic@utdallas.edu,  milena.radnovic@sydney.edu.au}}
\date{}
\begin{document}

\maketitle

\begin{center}
\emph{Dedicated to Academician A.~T.~Fomenko on the occasion of his 80th anniversary.}
\end{center}

\smallskip

\begin{abstract}
In this work, we introduce a novel concept of magic billiards, which can be seen as an umbrella, unifying several well-known generalisations of mathematical billiards.  We analyse  properties of magic billiards in the case of elliptical boundaries.
We provide explicit conditions for periodicity in algebro-geometric, analytic, and polynomial forms.
A topological description of those billiards is given using Fomenko graphs.

\smallskip

\emph{Keywords:} integrable systems, elliptical billiards, topological billiards, Fomenko graphs, periodic trajectories, Cayley's conditions, divisors on elliptic curves, polynomial Pell's equations

\smallskip

\textbf{MSC2020:} 37C83, 37J35, 37J39
\end{abstract}

\tableofcontents

\section{Introduction}\label{intro}

The elliptic billiard \cites{Bir,Bol1990,KT1991,Tab} is a notable example of a completely Liouville integrable system. 
Thus, the Liouville-Arnold theorem implies that the phase space of that particular billiard is foliated into invariant Liouville tori \cite{Ar}. 

Fomenko and his school developed a beautiful theory for topological description and classification of integrable systems using what are now known as \emph{Fomenko graphs} and \emph{Fomenko-Zieschang invariants}, see \cites{Fomenko1987,FZ1991} and, in particular, the book \cite{BF2004}, with the fundamentals of that theory, including a large list of well-known integrable systems, such as the integrable cases of rigid body motion and geodesic flows on surfaces.

The use of topological tools in the study of integrable billiards was initiated by the authors in \cites{DR2009}, see also \cites{DR2010, DR2011}. 
Further details and applications to other integrable systems can be found in the literature related to billiards \cites{Fokicheva2014, R2015, DR2017, VK2018,FV2019, FV2019a, PRK2020, DGR2021, DGR2022,BF2024}.
For the applications in the broader theory of Hamiltonian systems with two degrees of freedom see \cites{BMF1990,RRK2008,BBM2010}.

An important milestone of this theory is the so-called \emph{Fomenko conjecture}, emphasizing a surprising universality of billiard dynamics. This conjecture is about  realization of topology of Liouville foliations of smooth and real-analytic integrable Hamiltonian systems by integrable billiards, see e.g. \cite{FKK2020} and \cite{FoVed} and references therein.

In this paper, we introduce a general concept of \emph{magic billiards}, where after hitting the boundary the particle is magically transported to another point of the boundary and continues motion from there.
A formal definition of such class of systems is given in
Section \ref{sec:definition}. 
Magic billiards can be seen as an umbrella, unifying several well-known generalisations of mathematical billiards (Example \ref{ex:standard}), see e.g. Examples \ref{ex:Finsler} and \ref{ex:slipping}.

In Section \ref{sec:magic-ell}, we focus to magic billiards within an ellipse, and among them only to those ones where the equations of motion in elliptic coordinates remain the same as for the standard billiard, which will mean that the obtained system will be integrable.
We provide conditions for periodicity of such systems and give topological description using Fomenko graphs.
We note that one of the cases we consider, so called \emph{billiards with slipping} was recently introduced and studied by Fomenko and his school \cites{FVZ2021,FV2021,VZ2022,Fom2023,Zav2023}.
In Section \ref{sec:magic-ann}, we consider magic billiard within elliptic annulus. 
The last Section \ref{sec:conclusions} contains discussion.

\section{Definition of magic billiards}\label{sec:definition}

In this section, we will introduce a new class of dynamics, where a particle moves along straight segments by constant speed within a given domain in the plane, and when it reaches its boundary, it is transported to another point of the domain boundary from where it continues the motion within the domain.

More formally, we will introduce that dynamics as follows.

Let $D$ be a given domain in the plane, which is bounded by a smooth closed curve. Suppose that $\varphi$ is a continuous bijective mapping of the boundary $\partial D$ onto itself.
Now, consider the circle bundle $S^1(\partial D)$ with the base $\partial D$, such that the fiber over any point $p\in\partial D$ consists of the unit vectors in the tangent space to the plane at $p$.
Let $\varphi^*$ be a continuous bijective mapping of $S^1(\partial D)$ onto itself, satisfying the following:
\begin{itemize}
	\item $\pi\circ\varphi^*=\varphi\circ\pi$, where 
	$\pi:S^1(\partial D)\to\partial D$ is the projection to the base points;
	\item $\varphi^*$ maps tangent vectors to the boundary $\partial D$ to tangent vectors to that boundary curve;
	\item vectors pointing outwards $D$ are mapped to vectors pointing inwards $D$.
\end{itemize}

\begin{definition}
\emph{A magic billiard} $(D,\varphi,\varphi^*)$ is a dynamical system where a particle moves with a unit speed along straight lines in the interior of $D$, and when it hits the boundary at a point $A$ with velocity $\vec v$, it will bounce off at point $\varphi(A)$ with velocity $\varphi^*(\vec{v})$.
\end{definition}

\begin{example}\label{ex:standard}
Notice that the standard billiard in $D$ belongs to the class of just defined magic billiards.
There, mapping $\varphi$ is the identity and $\varphi^*$ is the billiard reflection, i.e.~reflection with respect to the direction of the tangent line to the boundary at each point.
\end{example}

\begin{example}\label{ex:Finsler}
Similarly, projective, Finsler and Minkowski billiards \cite{Tab1997,GT2002,Radn2003,KT2009,DR2012,DR2013,DR2017,ADR2019}, see also \cite{GM2024}, are classes of magic billiards with $\varphi$ being the identity map.
\end{example}

\begin{example}\label{ex:slipping}
Another class of magic billiards are billiards with slipping \cite{FVZ2021} (see also \cite{DGKh2024}).
Namely, it that case, $\varphi$ is an isometry of the boundary $\partial D$, while $\varphi^*$ is defined as follows.

First, choose a direction of the boundary $\partial D$ and note that the isometry $\varphi$ either preserves or reverses it.
If the direction is preserved, then we set that $\varphi^*$ maps 
the vectors tangent to $\partial D$ and pointing in the direction of $\partial D$ to the tangent vectors also pointing in the direction of the boundary.
If the direction if reversed, then such vectors are mapped to the vectors pointing in the opposite direction.

Now, suppose that $\vec{v}$ belongs to the fiber over point $p\in\partial D$ of the circle bundle $S^1(\partial D)$.
Denote by $\vec t$ one of the two unit tangent vectors to $\partial D$ at $p$. 
Then $\varphi^*(\vec v)$ is the unique vector satisfying the following equality of oriented angles 
$\angle(\vec t,\vec v)=2\pi-\angle(\varphi^*(\vec t),\varphi^*(\vec v))$.

Note that for general magic billiards, it is not required that mapping $\varphi$ is an isometry.
\end{example}

\section{Magic billiards within an ellipse}
\label{sec:magic-ell}

Before starting the analysis of novel examples of magic elliptical billiards, we will review the standard billiard within an ellipse.

Suppose that the ellipse is given by:
\begin{equation}\label{eq:ellipse}
\E\ :\ \frac{x^2}a+\frac{y^2}b=1, 
\quad
a>b>0.
\end{equation}
Following classical ideas of Jacobi, one can introduce \emph{the elliptic coordinates} $(\lambda_1,\lambda_2)$, which are, for each given point in the plane, the parameters of an ellipse and a hyperbola from the confocal family:
\begin{equation}\label{eq:confocal}
	\C_{\lambda}\ :\ \frac{x^2}{a-\lambda}+\frac{y^2}{b-\lambda}=1,
\end{equation}
which intersect at that point.
Note that each billiard trajectory within $\E$ has a unique \emph{caustic} $\C_{\alpha}$, which is touching each segment of the trajectory.
The differential equation of the billiard motion is separated in elliptic coordinates:
\begin{equation}\label{eq:elliptic-dif}
\frac{d\lambda_1}{\sqrt{(a-\lambda_1)(b-\lambda_1)(\alpha-\lambda_1)}}
+
\frac{d\lambda_2}{\sqrt{(a-\lambda_2)(b-\lambda_2)(\alpha-\lambda_2)}}
=
0.
\end{equation}

We are interested in magic billiards which keep those nice geometric and analytic properties of standard elliptic billiards, so we will focus to the mappings $\varphi$ and $\varphi^*$ which preserve the equation \eqref{eq:elliptic-dif}.

In particular, we note that the elliptic coordinates remain unchanged for $\varphi$ being one of the following:
\begin{itemize}
	\item the reflection with respect to one of the axes of the billiard boundary $\E$; or
	\item the half-turn around the center of $\E$.
\end{itemize}
Notice that such $\varphi$ is defined as an isometry on the whole plane, not only on the boundary of the billiard table.
Thus, the differential map $d\varphi$ will preserve the circle bundle $S^1(\partial D)$: in fact it maps each fiber of $S^1(\partial D)$ isometrically to another fiber.
Moreover, because the elliptic coordinates are invariant with respect to reflections with respect to the axes, the equation \eqref{eq:elliptic-dif} will also be invariant if $\varphi^*$ is the composition of the billiard reflection at the point of impact and the differential map $d\varphi$.
Thus, for such $\varphi$ and $\varphi^*$, the dynamics of the magic billiard $(\E,\varphi,\varphi^*)$ in elliptic coordinates will be identical to the usual billiard motion.
In particular, each trajectory of such magic billiards will have a unique caustic from the family of conics which are confocal with the boundary $\E$.

\begin{remark}\label{rem:configuration}
Note that for such $\varphi$, the configuration space can be defined as $\mathcal{D}/\sim$, where $\mathcal{D}$ is the billiard table, and $x\sim\varphi(x)$, for $x\in\E=\partial\mathcal{D}$.
The phase space is defined, analogously, by identifying the velocity vectors mapped into each other by $\varphi^*$.
\end{remark}

\begin{remark}\label{rem:every2}
Since $\varphi$ and $\varphi^*$ are involutions, one can see that every second segment of a trajectory of the corresponding magic billiard coincides with every second segment of the standard billiard, as illustrated in Figure \ref{fig:every2}.
\end{remark}
\begin{figure}[h]
	\centering
	\begin{tikzpicture}[>=Stealth]
		\coordinate (A1) at (0.2995,1.99001);
		\coordinate (A2) at (-2.97804,0.241574);
		\coordinate (A2') at (-2.97804,-0.241574);
		\coordinate (A3) at (-1.14631,1.84824);
		\coordinate (A3') at (-1.14631,-1.84824);
		\coordinate (A4) at (2.88779,-0.541882);
		\coordinate (A4') at (2.88779,0.541882);
		\coordinate (A5) at (1.84042,-1.57943);
		\coordinate (A5') at (1.84042,1.57943);
		\coordinate (A6) at (-2.70702,0.862046);
		\coordinate (A6') at (-2.70702,-0.862046);

		\draw [very thick,color=gray] (0.,0.) ellipse (3 and 2);
		\draw [thick,color=gray] (0.,0.) ellipse (2.54951 and 1.22474);

		\draw [->,line width=1.5pt] (A1)-- (A2);
		\draw [->,line width=1.5pt] (A2)--(A3');
		\draw [->,line width=1.5pt] (A3')--(A4);
		\draw [->,line width=1.5pt] (A4)--(A5');
%		\draw [->,line width=1.5pt] (A5')--(A6);
		
		\draw[black, fill=black]
		(A1) circle (2pt);% node[above] {$A_1$};
		\draw[black, fill=black]
		(A2) circle (2pt) node[left] {$1$};
		\draw[black, fill=black]
		(A3') circle (2pt) node[below] {$2$};
		\draw[black, fill=black]
		(A4) circle (2pt) node[right] {$3$};
		\draw[black, fill=black]
		(A5') circle (2pt) node[above] {$4$};
		
		\begin{scope}[shift={(8,0)}]
		\coordinate (A1) at (0.2995,1.99001);
\coordinate (A2) at (-2.97804,0.241574);
\coordinate (A2') at (-2.97804,-0.241574);
\coordinate (A3) at (-1.14631,1.84824);
\coordinate (A3') at (-1.14631,-1.84824);
\coordinate (A4) at (2.88779,-0.541882);
\coordinate (A4') at (2.88779,0.541882);
\coordinate (A5) at (1.84042,-1.57943);
\coordinate (A5') at (1.84042,1.57943);

\draw [very thick,color=gray] (0.,0.) ellipse (3 and 2);
\draw [thick,color=gray] (0.,0.) ellipse (2.54951 and 1.22474);

\draw [->,line width=1.5pt] (A1)-- (A2);
\draw [->,line width=1.5pt] (A2')--(A3);
\draw [->,gray,dashed,line width=1.5pt] (A2)--(A3');
\draw [->,line width=1.5pt] (A3')--(A4);
\draw [->,line width=1.5pt] (A4')--(A5);
\draw [->,gray,dashed,line width=1.5pt] (A4)--(A5');

\draw[black, fill=black]
(A1) circle (2pt);% node[above] {$A_1$};
\draw[black, fill=black]
(A2) circle (2pt) node[left] {$1$};
\draw[black, fill=black]
(A2') circle (2pt);% node[below left] {$A_2'$};
\draw[black, fill=black]
(A3) circle (2pt) node[above] {$2$};
\draw[black, fill=black]
(A3') circle (2pt);% node[below] {$A_3'$};
\draw[black, fill=black]
(A4) circle (2pt) node[right] {$3$};
\draw[black, fill=black]
(A4') circle (2pt);% node[right] {$A_4'$};
\draw[black, fill=black]
(A5) circle (2pt) node[below] {$4$};
\draw[black, fill=black]
(A5') circle (2pt);% node[above] {$A_5'$};
		\end{scope}
		
		\begin{scope}[shift={(0,-5)}]
	\coordinate (A1) at (0.2995,1.99001);
	\coordinate (A2) at (-2.97804,0.241574);
	\coordinate (A2') at (2.97804,0.241574);
	\coordinate (A3) at (1.14631,-1.84824);
	\coordinate (A3') at (-1.14631,-1.84824);
	\coordinate (A4) at (2.88779,-0.541882);
	\coordinate (A4') at (-2.88779,-0.541882);
	\coordinate (A5) at (-1.84042,1.57943);
	\coordinate (A5') at (1.84042,1.57943);

	\draw [very thick,color=gray] (0.,0.) ellipse (3 and 2);
	\draw [thick,color=gray] (0.,0.) ellipse (2.54951 and 1.22474);

	\draw [->,line width=1.5pt] (A1)-- (A2);
	\draw [->,line width=1.5pt] (A2')--(A3);
	\draw [->,gray,dashed,line width=1.5pt] (A2)--(A3');
	\draw [->,line width=1.5pt] (A3')--(A4);
	\draw [->,line width=1.5pt] (A4')--(A5);
	\draw [->,gray,dashed,line width=1.5pt] (A4)--(A5');

	\draw[black, fill=black]
	(A1) circle (2pt);% node[above] {$A_1$};
	\draw[black, fill=black]
	(A2) circle (2pt) node[left] {$1$};
	\draw[black, fill=black]
	(A2') circle (2pt);% node[below left] {$2$};
	\draw[black, fill=black]
	(A3) circle (2pt) node[below] {$2$};
	\draw[black, fill=black]
	(A3') circle (2pt);% node[below] {$A_3'$};
	\draw[black, fill=black]
	(A4) circle (2pt) node[right] {$3$};
	\draw[black, fill=black]
	(A4') circle (2pt);% node[right] {$A_4'$};
	\draw[black, fill=black]
	(A5) circle (2pt) node[above] {$4$};
	\draw[black, fill=black]
	(A5') circle (2pt);% node[above] {$A_5'$};
\end{scope}

		\begin{scope}[shift={(8,-5)}]
	\coordinate (A1) at (0.2995,1.99001);
	\coordinate (A2) at (-2.97804,0.241574);
	\coordinate (A2') at (2.97804,-0.241574);
	\coordinate (A3) at (1.14631,1.84824);
	\coordinate (A3') at (-1.14631,-1.84824);
	\coordinate (A4) at (2.88779,-0.541882);
	\coordinate (A4') at (-2.88779,0.541882);
	\coordinate (A5) at (-1.84042,-1.57943);
	\coordinate (A5') at (1.84042,1.57943);
	\coordinate (A2a) at ($ (A1)!8.0/10!(A2) $);

	\draw [very thick,color=gray] (0.,0.) ellipse (3 and 2);
	\draw [thick,color=gray] (0.,0.) ellipse (2.54951 and 1.22474);

	\draw [->,line width=1.5pt] (A1)-- (A2a);
	\draw [line width=1.5pt] (A1)-- (A2);
	\draw [->,line width=1.5pt] (A2')--(A3);
	\draw [->,gray,dashed,line width=1.5pt] (A2)--(A3');
	\draw [->,line width=1.5pt] (A3')--(A4);
	\draw [->,line width=1.5pt] (A4')--(A5);
	\draw [->,gray,dashed,line width=1.5pt] (A4)--(A5');

	\draw[black, fill=black]
	(A1) circle (2pt);% node[above] {$A_1$};
	\draw[black, fill=black]
	(A2) circle (2pt) node[left] {$1$};
	\draw[black, fill=black]
	(A2') circle (2pt);% node[below left] {$A_2'$};
	\draw[black, fill=black]
	(A3) circle (2pt) node[above] {$2$};
	\draw[black, fill=black]
	(A3') circle (2pt);% node[below] {$A_3'$};
	\draw[black, fill=black]
	(A4) circle (2pt) node[below right] {$3$};
	\draw[black, fill=black]
	(A4') circle (2pt);% node[right] {$A_4'$};
	\draw[black, fill=black]
	(A5) circle (2pt) node[below] {$4$};
	\draw[black, fill=black]
	(A5') circle (2pt);% node[above] {$A_5'$};
\end{scope}

	\end{tikzpicture}
	\caption{A standard billiard trajectory (upper left) and three trajectories of the magic billiard with the same initial segment.
	Upper right: flipping over the long axis; lower left: flipping over the short axis; lower right: half-turn around the center.
	In each case, the consecutive points of impact with the boundary are numerated.
	}	\label{fig:every2}
\end{figure}
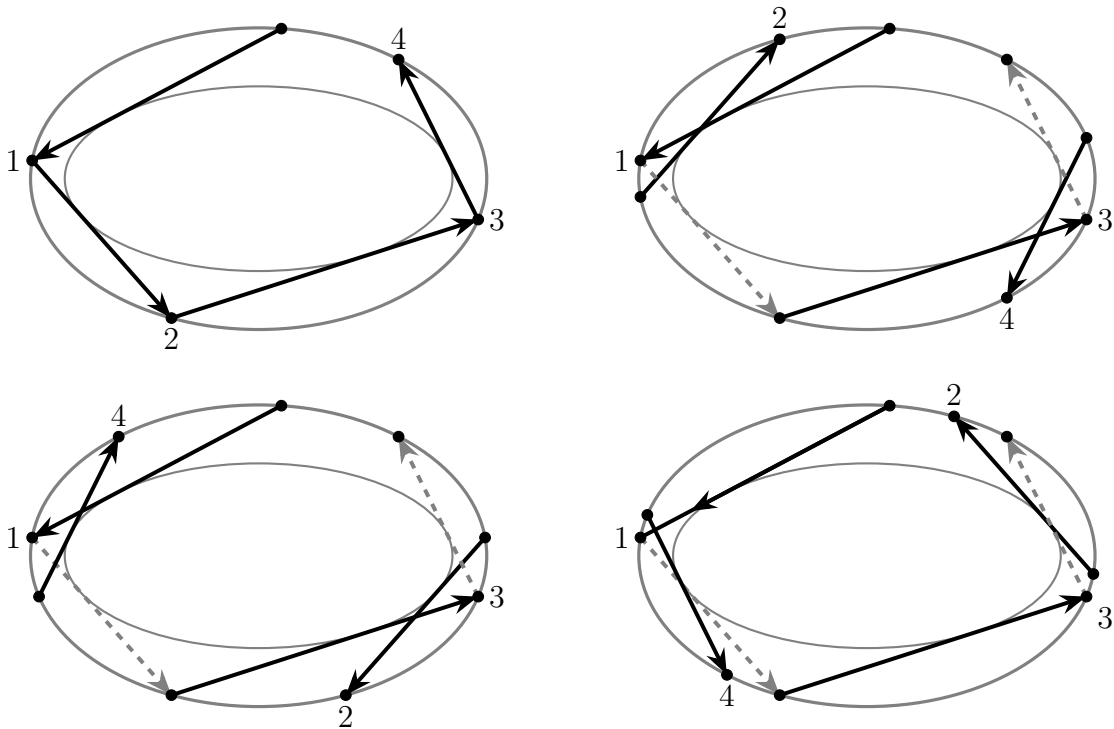

\begin{remark}\label{rem:even}
As a consequence of Remark \ref{rem:every2}, we note that a magic billiard trajectory in those cases is periodic if and only if the corresponding standard billiard trajectory is periodic.
	Moreover,
 the $n$-periodicity conditions for even $n$ will be the same as for the standard elliptical billiard.
For odd periods, the conditions will be different, and will be distinct for each case that we consider. 
\end{remark}

\begin{remark}
Note that the construction of the magic billiard trajectories reminds of the construction of the standard billiard trajectories within a half-ellipse or a quarter-ellipse.
Namely, in all of those cases any trajectory in part coincides with the trajectory of elliptic billiard with the same initial conditions, while in part is symmetric to that trajectory, as illustrated in Figures \ref{fig:every2} and \ref{fig:half}.
Note that the trajectories of each magic billiard that we consider and the standard billiards within ellipse, half-ellipse, and quarter-ellipse satisfy the equation \eqref{eq:elliptic-dif}, thus they all look the same in elliptic coordinates.
\end{remark}
\begin{figure}[h]
	\centering
	\begin{tikzpicture}[>=Stealth]
		\coordinate (A1) at (0.2995,1.99001);
		\coordinate (A2) at (-2.97804,0.241574);
		\coordinate (A2') at (-2.97804,-0.241574);
		\coordinate (A3) at (-1.14631,1.84824);
		\coordinate (A3') at (-1.14631,-1.84824);
		\coordinate (A4) at (2.88779,-0.541882);
		\coordinate (A4') at (2.88779,0.541882);
		\coordinate (A5) at (1.84042,-1.57943);
		\coordinate (A5') at (1.84042,1.57943);
		\coordinate (A6) at (-2.70702,0.862046);
		\coordinate (A6') at (-2.70702,-0.862046);

		\draw [very thick,color=black] (0.,0.) ellipse (3 and 2);
		\draw [thick,color=gray] (0.,0.) ellipse (2.54951 and 1.22474);

		\draw [->,line width=1.5pt] (A1)-- (A2)--(A3')--(A4)--(A5');
		%		\draw [->,line width=1.5pt] (A5')--(A6);
		
		\draw[black, fill=black]
		(A1) circle (2pt);% node[above] {$A_1$};
		\draw[black, fill=black]
		(A2) circle (2pt);% node[left] {$1$};
		\draw[black, fill=black]
		(A3') circle (2pt);% node[below] {$2$};
		\draw[black, fill=black]
		(A4) circle (2pt);% node[right] {$3$};
		\draw[black, fill=black]
		(A5') circle (2pt);% node[above] {$4$};
		
		\begin{scope}[shift={(8,0)}]
			\coordinate (A1) at (0.2995,1.99001);
			\coordinate (A2) at (-2.97804,0.241574);
			\coordinate (A2') at (-2.97804,-0.241574);
			\coordinate (A3) at (-1.14631,1.84824);
			\coordinate (A3') at (-1.14631,-1.84824);
			\coordinate (A4) at (2.88779,-0.541882);
			\coordinate (A4') at (2.88779,0.541882);
			\coordinate (A5) at (1.84042,-1.57943);
			\coordinate (A5') at (1.84042,1.57943);
			\coordinate (A1h) at (-2.7663,0);
			\coordinate (A4h) at (2.62024,0);
			
			\draw [very thick,dashed,color=gray] (-3,0.) arc (-180:0:3 and 2);
			\draw [very thick,color=black] (3,0.) arc (0:180:3 and 2);
			\draw[very thick,color=black](-3,0)--(3,0);
			\draw [thick,color=gray] (0,0.) ellipse (2.54951 and 1.22474);

			\draw [->,line width=1.5pt] (A1)-- (A2)--(A1h)--(A3)--(A4')--(A4h)--(A5');
			\draw [gray,dashed,line width=1.5pt] (A1h)--(A3')--(A4)--(A4h);

			\draw[black, fill=black]
			(A1) circle (2pt);% node[above] {$A_1$};
			\draw[black, fill=black]
			(A1h) circle (2pt);% node[above] {$A_1$};
			\draw[black, fill=black]
			(A4h) circle (2pt);% node[above] {$A_1$};
			
			\draw[black, fill=black]
			(A2) circle (2pt);% node[left] {$1$};
			\draw[black, fill=black]
			(A3) circle (2pt);% node[above] {$2$};2.62024
			\draw[gray, fill=gray]
			(A3') circle (2pt);% node[above] {$2$};
			
			\draw[gray, fill=gray]
			(A4) circle (2pt);% node[right] {$3$};
			\draw[black, fill=black]
			(A4') circle (2pt);% node[right] {$A_4'$};
			\draw[black, fill=black]
			(A5') circle (2pt);% node[above] {$A_5'$};
		\end{scope}
		
		\begin{scope}[shift={(0,-5)}]
			\coordinate (A1) at (0.2995,1.99001);
			\coordinate (A2) at (-2.97804,0.241574);
			\coordinate (A2') at (2.97804,0.241574);
			\coordinate (A3) at (1.14631,-1.84824);
			\coordinate (A3') at (-1.14631,-1.84824);
			\coordinate (A4) at (2.88779,-0.541882);
			\coordinate (A4') at (-2.88779,-0.541882);
			\coordinate (A5) at (-1.84042,1.57943);
			\coordinate (A5') at (1.84042,1.57943);
			\coordinate (A1v) at (0,1.83024);
			\coordinate (A3v) at (0,-1.47703);

			\draw [very thick,dashed,color=gray] (0,2) arc (90:270:3 and 2);
			\draw [very thick,color=black] (0,-2) arc (-90:90:3 and 2);
			\draw [thick,color=gray] (0,0) ellipse (2.54951 and 1.22474);
			\draw[very thick,color=black](0,-2)--(0,2);

			\draw [->,line width=1.5pt] (A1)--(A1v)--(A2')--(A3)--(A3v)--(A4)--(A5');
			\draw [gray,dashed,line width=1.5pt] (A1v)--(A2)--(A3')--(A3v);

			\draw[black, fill=black]
			(A1) circle (2pt);% node[above] {$A_1$};
			\draw[gray, fill=gray]
			(A2) circle (2pt);% node[left] {$1$};
			\draw[black, fill=black]
			(A3) circle (2pt);% node[below] {$2$};
			\draw[gray, fill=gray]
			(A3') circle (2pt);% node[below] {$A_3'$};
			\draw[black, fill=black]
			(A4) circle (2pt);% node[right] {$3$};
			\draw[black, fill=black]
			(A5') circle (2pt);% node[above] {$A_5'$};
			\draw[black, fill=black]
			(A1v) circle (2pt);
			\draw[black, fill=black]
			(A3v) circle (2pt);
		\end{scope}
		
		\begin{scope}[shift={(8,-5)}]
			\coordinate (A1) at (0.2995,1.99001);
			\coordinate (A2) at (-2.97804,0.241574);
			\coordinate (A2') at (2.97804,0.241574);
			\coordinate (A3) at (1.14631,1.84824);
			\coordinate (A3') at (-1.14631,-1.84824);
			\coordinate (A4) at (2.88779,-0.541882);
			\coordinate (A4') at (2.88779,0.541882);
			\coordinate (A5) at (1.84042,-1.57943);
			\coordinate (A5') at (1.84042,1.57943);
			\coordinate (A2a) at ($ (A1)!8.0/10!(A2) $);
			\coordinate (A1h) at (2.7663,0);
			\coordinate (A4h) at (2.62024,0);
			\coordinate (A1v) at (0,1.83024);
			\coordinate (A3v) at (0,1.47703);

			\draw [very thick,dashed,color=gray] (0,2) arc (90:360:3 and 2);
			\draw [very thick,color=black] (3,0) arc (0:90:3 and 2);
			\draw[very thick,color=black](3,0)--(0,0)--(0,2);
			\draw [thick,color=gray] (0.,0.) ellipse (2.54951 and 1.22474);

			\draw [->,black,line width=1.5pt] (A1)-- (A1v)--(A2')--(A1h)--(A3)--(A3v)--(A4')--(A4h)--(A5');
			\draw [gray,dashed,line width=1.5pt] (A1v)--(A2)--(A3')--(A4)--(A4h);

\draw[black, fill=black](A1) circle (2pt);
\draw[black, fill=black](A1v) circle (2pt);
\draw[black, fill=black](A2') circle (2pt);
\draw[black, fill=black](A1h) circle (2pt);
\draw[black, fill=black](A3) circle (2pt);
\draw[black, fill=black](A3v) circle (2pt);
\draw[black, fill=black](A4') circle (2pt);
\draw[black, fill=black](A4h) circle (2pt);
\draw[black, fill=black](A5') circle (2pt);

\draw[gray, fill=gray](A2) circle (2pt);
\draw[gray, fill=gray](A3') circle (2pt);
\draw[gray, fill=gray](A4) circle (2pt);
%\draw[gray, fill=gray](A5) circle (2pt);

		\end{scope}
		
	\end{tikzpicture}
	\caption{Billiards within ellipse (upper left), half-ellipses (upper right and lower left), and quarter-ellipse (lower right). 
	All trajectories have the same initial conditions.
	Unfolding the trajectories within the last three domains about the axes give the trajectory within the ellipse.
	}	\label{fig:half}
\end{figure}
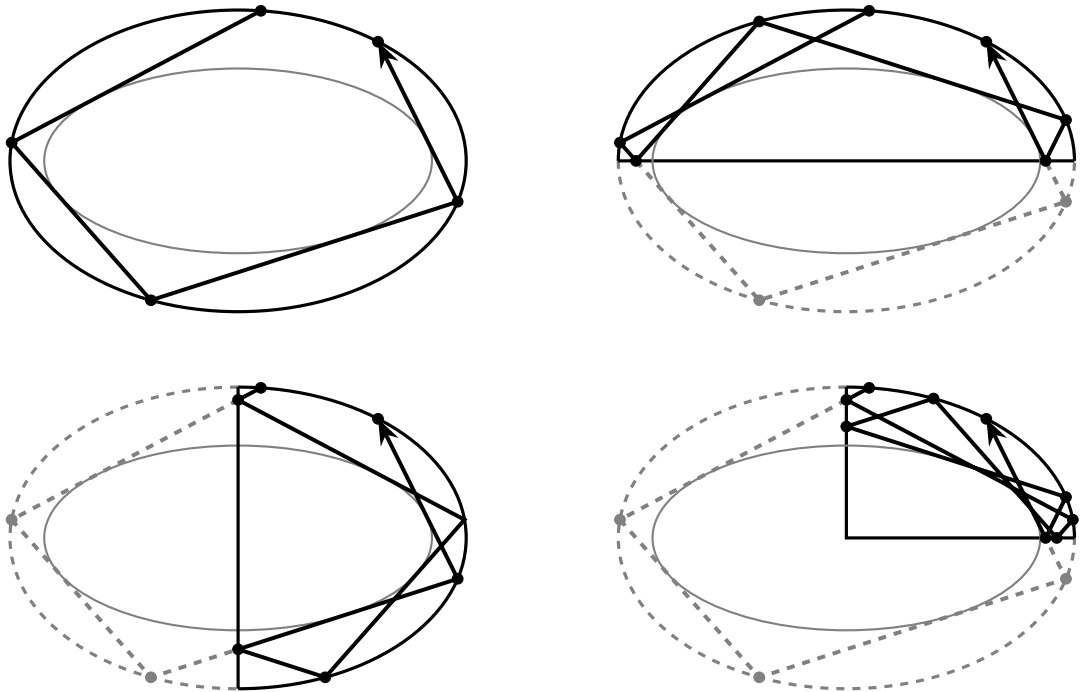

\begin{remark}\label{rem:flip}
In \cite{Fokicheva2015}, generalized billiard domains obtained by gluing the pieces bounded by confocal conics along parts of their boundaries were classified and the Fomenko-Zieschang invariants were calculated for some such domains. In particular,
the billiards with flipping over one of the axes can be related to topological billiards $\Delta_{\beta}(A_2')^2_{2x}$ and $\Delta_{\beta}(A_1)^2_{2y}$ from \cite{Fokicheva2015}, by
reflecting half of the table relative to the corresponding axis and then gluing
the two halves of the disk along the isometry of the elliptic boundary arcs.
\end{remark}

In the following subsections, we will consider each of the three magic billiards separately.
We will derive periodicity conditions in three forms: algebro-geometric, analytic, and polynomial.
We will also present Fomenko graphs corresponding to the Liouville foliation of the phase space.

\subsection{
	Elliptical billiard with flipping over long axis}

In this system, $\varphi$ is the reflection with respect to the longer axis.
Examples of its trajectories are depicted in Figure \ref{fig:magic1}.	
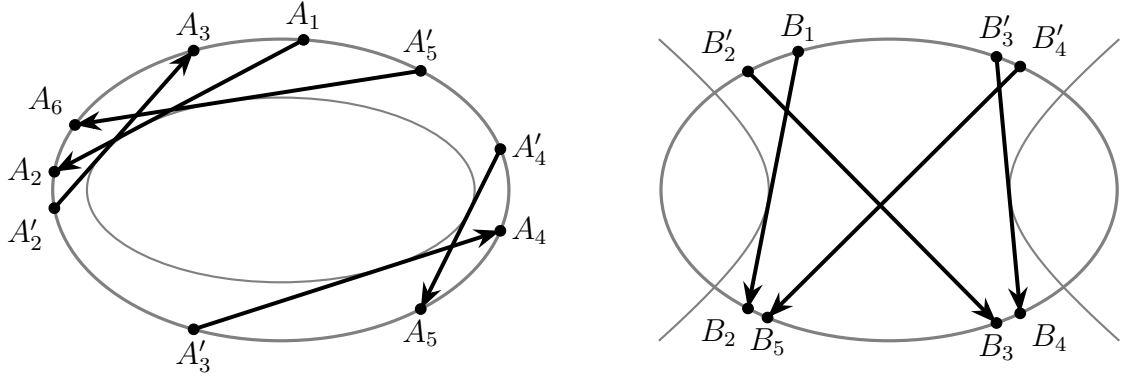
\begin{figure}[h]
	\centering
	\begin{tikzpicture}[>=Stealth]
		\coordinate (A1) at (0.2995,1.99001);
		\coordinate (A2) at (-2.97804,0.241574);
		\coordinate (A2') at (-2.97804,-0.241574);
		\coordinate (A3) at (-1.14631,1.84824);
		\coordinate (A3') at (-1.14631,-1.84824);
		\coordinate (A4) at (2.88779,-0.541882);
		\coordinate (A4') at (2.88779,0.541882);
		\coordinate (A5) at (1.84042,-1.57943);
		\coordinate (A5') at (1.84042,1.57943);
		\coordinate (A6) at (-2.70702,0.862046);
		\coordinate (A6') at (-2.70702,-0.862046);

		\draw [very thick,color=gray] (0.,0.) ellipse (3 and 2);
		\draw [thick,color=gray] (0.,0.) ellipse (2.54951 and 1.22474);

		\draw [->,line width=1.5pt] (A1)-- (A2);
		\draw [->,line width=1.5pt] (A2')--(A3);
		\draw [->,line width=1.5pt] (A3')--(A4);
		\draw [->,line width=1.5pt] (A4')--(A5);
		\draw [->,line width=1.5pt] (A5')--(A6);
	
	\draw[black, fill=black]
	(A1) circle (2pt) node[above] {$A_1$};
	\draw[black, fill=black]
	(A2) circle (2pt) node[left] {$A_2$};
	\draw[black, fill=black]
	(A2') circle (2pt) node[below left] {$A_2'$};
		\draw[black, fill=black]
	(A3) circle (2pt) node[above] {$A_3$};
	\draw[black, fill=black]
	(A3') circle (2pt) node[below] {$A_3'$};
		\draw[black, fill=black]
	(A4) circle (2pt) node[right] {$A_4$};
	\draw[black, fill=black]
	(A4') circle (2pt) node[right] {$A_4'$};
		\draw[black, fill=black]
	(A5) circle (2pt) node[below] {$A_5$};
	\draw[black, fill=black]
	(A5') circle (2pt) node[above] {$A_5'$};
	\draw[black, fill=black]
	(A6) circle (2pt) node[above left] {$A_6$};
	
	\begin{scope}[shift={(8,0)}]
			\coordinate (B1) at (-1.19522, 1.83442);
		\coordinate (B2) at (-1.85594, -1.57134);
		\coordinate (B2') at (-1.85594, 1.57134);
		\coordinate (B3) at (1.41241, -1.76448);
		\coordinate (B3') at (1.41241, 1.76448);
		\coordinate (B4) at (1.72475, -1.63643);
		\coordinate (B4') at (1.72475, 1.63643);
		\coordinate (B5) at (-1.59857, -1.69241);
		
			\draw [very thick,color=gray] (0.,0.) ellipse (3 and 2);
	
	\draw[thick, gray, domain=-2:2,smooth,variable=\y]
	plot ({sqrt(2.5+5/3*\y*\y)},{\y});
	\draw[thick, gray, domain=-2:2,smooth,variable=\y]
	plot ({-sqrt(2.5+5/3*\y*\y)},{\y});

	\draw [->,line width=1.5pt] (B1)-- (B2);
	\draw [->,line width=1.5pt] (B2')--(B3);
	\draw [->,line width=1.5pt] (B3')--(B4);
	\draw [->,line width=1.5pt] (B4')--(B5);
	
	\draw[black, fill=black]
	(B1) circle (2pt) node[above] {$B_1$};
	\draw[black, fill=black]	(B2) circle (2pt) node[below left] {$B_2$};
	\draw[black, fill=black](B2') circle (2pt) node[above left] {$B_2'$};
	\draw[black, fill=black](B3) circle (2pt) node[below] {$B_3$};
	\draw[black, fill=black](B3') circle (2pt) node[above] {$B_3'$};
	\draw[black, fill=black](B4) circle (2pt) node[below right] {$B_4$};
	\draw[black, fill=black](B4') circle (2pt) node[above right] {$B_4'$};
	\draw[black, fill=black](B5) circle (2pt) node[below] {$B_5$};
	\end{scope}

\end{tikzpicture}
	\caption{Magic billiard with flipping over the long axis: each time when the particle hits the boundary, it is reflected off the boundary and immediately magically flipped over the longer axis.
	}	\label{fig:magic1}
\end{figure}

Periodicity conditions for such trajectories can be obtained by applying the results from \cite{DR2019}.
In the following theorem, we represent such conditions in the algebro-geometric form, from which the classical form of Cayley's and in the polynomial form can be derived.

\begin{theorem}\label{th:divisor-magic1}
In the billiard table bounded by the ellipse $\E$ given by \eqref{eq:ellipse}, consider the magic billiard with flipping over the long axis.
Consider a trajectory of such billiard with the caustic $\C_{\beta}$ from the confocal family \eqref{eq:confocal}.
Such a trajectory
is $n$-periodic if and only if one of the following conditions is satisfied:
	\begin{itemize}
		\item $n$ is even and $nQ_0\sim nQ_{\infty}$;
		\item $n$ is odd, $\C_{\beta}$ is hyperbola, and $nQ_0\sim nQ_b$. 
	\end{itemize}
Here $Q_0$, $Q_b$, $Q_{\infty}$ denote the points with coordinates $(0,\sqrt{ab\beta})$, $(b,0)$, $(\infty,\infty)$ on the elliptic curve:
\begin{equation}\label{eq:curve}
	y^2=(a-x)(b-x)(\beta-x).
\end{equation}
\end{theorem}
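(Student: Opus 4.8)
The plan is to reduce the periodicity of the magic billiard to the periodicity of a related standard billiard, and then to apply the known Cayley-type conditions for the standard elliptical billiard in algebro-geometric form (from \cite{DR2019}). By Remark~\ref{rem:every2}, every second segment of the magic trajectory coincides with every second segment of the standard billiard trajectory with the same caustic $\C_\beta$. Thus for even $n$, $n$-periodicity of the magic trajectory is equivalent to $n$-periodicity of the standard one, and the condition is the classical one $nQ_0\sim nQ_\infty$ on the curve \eqref{eq:curve}. This disposes of the first bullet immediately, so the whole content of the theorem is in the odd case.

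For odd $n$, I would track the trajectory through one full period and see what algebraic closure condition on the divisor it imposes. Introduce the standard encoding: each segment of a billiard trajectory with caustic $\C_\beta$ corresponds, via elliptic coordinates, to a point on the elliptic curve $y^2=(a-x)(b-x)(\beta-x)$, and consecutive segments differ by a fixed shift, which is $Q_0-Q_\infty$ (or its negative) in the group law — this is exactly the classical Jacobi/Darboux picture underlying \eqref{eq:elliptic-dif}. The flip over the long axis $y=0$ acts on elliptic coordinates trivially (the coordinates are invariant), but it reverses the sign of the $y$-coordinate of the point on the curve, i.e.\ it is the hyperelliptic involution composed with a translation; concretely, reflecting over the long axis sends a point of the trajectory to its image under the involution fixing the branch point $Q_b=(b,0)$, because $b$ is the parameter of the degenerate conic through the foci along that axis. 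So after one magic bounce the representing point is reflected through $Q_b$ rather than merely translated. Composing $n$ such steps (translation by the caustic shift, then reflection through $Q_b$), and using that $n$ is odd so that an odd number of reflections is again a single reflection, the closure condition becomes $nQ_0\sim nQ_b$ on the curve. The hypothesis that $\C_\beta$ is a hyperbola enters because only hyperbola-caustic trajectories cross the long axis (and do so transversally at every period), so that the flip is genuinely acting; for an ellipse caustic the trajectory stays on one side and the odd-periodic case is vacuous, which is why it is excluded.

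Concretely I would organize the argument as follows. First, recall from \cite{DR2019} the precise statement that a standard billiard trajectory with caustic $\C_\beta$ corresponds to a sequence of points on \eqref{eq:curve} related by a translation by a fixed element $s$, and that $n$-periodicity is $ns\sim 0$, which in the chosen normalization reads $nQ_0\sim nQ_\infty$. Second, identify the divisor-theoretic effect of the flip over the long axis: show it is the involution $P\mapsto \iota_b(P)$, where $\iota_b$ is the unique involution of the elliptic curve fixing $Q_b$ and $Q_\infty$ has to be replaced accordingly; equivalently $P+\iota_b(P)\sim 2Q_b$. This is the step requiring the most care — one must verify the claim that flipping over the long axis corresponds to reflection through the branch point $Q_b$ specifically (and not, say, through $Q_a$ or $Q_0$), by checking it on the elliptic-coordinate description of a boundary point and its mirror image, using that the long axis is the locus where the confocal hyperbola parameter equals $b$. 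Third, compose: going around one magic period means applying, $n$ times, the map ``translate by $s=Q_0-Q_\infty$, then reflect through $Q_b$''; since reflection-through-$Q_b$ is an anti-translation, the composite of $n$ such maps (with $n$ odd) is again a reflection, and demanding that it fix the starting point yields the stated congruence $nQ_0\sim nQ_b$. Finally, note that by Remark~\ref{rem:even} no further case analysis is needed because even $n$ was already handled.

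The main obstacle I expect is the precise bookkeeping in the second and third steps: pinning down exactly which branch point the long-axis flip corresponds to on the curve \eqref{eq:curve}, and then correctly combining $n$ translations-plus-reflections into one closure condition without sign or base-point errors. Once the flip is correctly identified as $P\mapsto 2Q_b-P$ in the group (with $Q_\infty$ as origin), the composition is a short formal computation: the trajectory closes iff $n(Q_0-Q_\infty)+\text{(net reflection)}$ is principal, which simplifies to $nQ_0\sim nQ_b$ exactly when $n$ is odd. The hyperbola hypothesis should be justified geometrically (trajectories with ellipse caustic never meet the long axis in their interior, so oddness is impossible), and this is consistent with $nQ_0\sim nQ_b$ having no solutions for ellipse caustics and small odd $n$, which can be checked as a sanity test.
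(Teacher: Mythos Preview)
Your proposal takes a different route from the paper, and it has two concrete gaps.

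First, your geometric justification for excluding odd $n$ with an ellipse caustic is wrong. You write that ``for an ellipse caustic the trajectory stays on one side'' of the long axis; it does not. A trajectory tangent to a confocal ellipse winds around that ellipse and crosses the long axis repeatedly. The correct reason odd periods are impossible for an ellipse caustic is a parity count, and this is exactly what the paper does: along a closed magic trajectory, the total number of side-changes with respect to the long axis must be even; that total is (number of flips)$+$(number of actual crossings)$=n+m$ when the caustic is an ellipse, and since $m$ is even (the short axis must be crossed evenly), $n$ is forced even. When the caustic is a hyperbola the total is $2n$, which is automatically even, so odd $n$ survives.

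Second, your group-law composition argument is not well-posed as written. You identify the long-axis flip with ``the involution fixing the branch point $Q_b$'', and then write ``reflection through $Q_b$'', i.e.\ $P\mapsto 2Q_b-P$ with $Q_\infty$ as origin. But $Q_b$ is $2$-torsion, so $2Q_b-P=-P$ is just the hyperelliptic involution and does not single out $Q_b$ at all; the same formula would arise for $Q_a$ or $Q_\beta$. So the mechanism by which $Q_b$ (rather than $Q_a$ or $Q_\beta$) enters the closure condition is not explained by your picture. The paper avoids this entirely: it integrates \eqref{eq:elliptic-dif} to get the general divisor relation $n(Q_0-Q_{c_1})+m(Q_{c_2}-Q_a)\sim0$, uses the parity argument above to get $m$ even (hence the second term vanishes), and then observes that for a hyperbola caustic one has $c_1=b$, which is how $Q_b$ appears. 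For even $n$ one then uses $2Q_{c_1}\sim 2Q_\infty$ to rewrite the condition as $nQ_0\sim nQ_\infty$.

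Your reduction of the even case via Remarks~\ref{rem:every2}--\ref{rem:even} is fine and matches the paper's intent; the substance is in the odd case, and there your argument needs to be replaced by the parity-counting approach.
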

\begin{proof}	
	Integrating \eqref{eq:elliptic-dif} along a trajectory of the magic billiard, we get that the $n$-periodicity condition will be equivalent to the following divisor condition:
	\begin{equation}\label{eq:divisor-cayley}
		n(Q_0-Q_{c_1})+m(Q_{c_2}-Q_a)\sim0
	\end{equation}
	on the elliptic curve \eqref{eq:curve}.
	Here, we have $\{c_1,c_2\}=\{\beta,b\}$ and $c_1<c_2$, while $Q_{\beta}$, $Q_a$, denote the points of the curve with coordinates $(\beta,0)$, $(a,0)$ respectively.
	Note that, on the trajectory, the elliptic coordinates $\lambda_1$ and $\lambda_2$ belong to the intervals $[0,c_1]$ and $[c_2,a]$ respectively.
	Natural numbers $n$ and $m$ represent the numbers of times each of them traces back and forth its respective interval along the closed trajectory.
	
	Note that the short axis of the ellipse is the degenerate conic $\C_a$ of the confocal family.
	Since that axis must be crossed even number of times along a closed trajectory, we have that $m$ is even, thus the periodicity condition \eqref{eq:divisor-cayley} reduces to $n(Q_0-Q_{c_1})\sim0$.
	
	On the other hand, the sum of the number of the flippings over the long axis and the number of times when the billiard particle actually crossed the long axis must also be even.
	That sum equals $n+m$ if $\C_{\beta}$ is an ellipse, and $2n$ if $\C_{\beta}$ is hyperbola.
	From there, we conclude that all closed trajectories with ellipse as caustic must have even period.
	
Now, the required conditions follow from the fact that $c_1=b$ when $\mathcal{C}_{\beta}$ is hyperbola and $2Q_{\beta}\sim2Q_{\infty}$.
\end{proof}

Now, analogously as in \cite{DR2019}, we can get the conditions of Cayley's type.

\begin{corollary}\label{cor:cayley-magic1}
A trajectory of the billiard within $\E$ with flipping over the long axis is $n$-periodic if and only if one of the following conditions is satisfied:
\begin{itemize}
	\item $n$ is even and
\begin{equation}\label{eq:cayley-even}
	\left|	\begin{array}{llll}
		B_3 & B_4 & \dots & B_{n/2+1}
		\\
		B_4 & B_5 & \dots & B_{n/2+2}
		\\
		\dots\\
		B_{n/2+1} & B_{n/2+2} & \dots & B_{n-1}
	\end{array}
	\right|=0;
	\end{equation}
	\item $n$ is odd, the caustic is hyperbola, and 
	\begin{equation}\label{eq:cayley-odd}
	\left|	\begin{array}{llll}
		C_2 & C_3 & \dots & C_{(n+1)/2}
		\\
		C_3 & C_4 & \dots & C_{(n+1)/2+1}
		\\
		\dots\\
		C_{(n+1)/2} & C_{(n+1)/2+1} & \dots & C_{n-1}
	\end{array}
	\right|=0.
	\end{equation}
 \end{itemize}
	Here, we denoted:
\begin{gather*}
	\sqrt{(a-x)(b-x)(\beta-x)}=B_0+B_1x+B_2x^2+\dots,
	\\
	\frac{\sqrt{(a-x)(b-x)(\beta-x)}}{b-x}=C_0+C_1x+C_2x^2+\dots,
\end{gather*}
the Taylor expansions around $x=0$, and $\beta$ is the parameter of the caustic from \eqref{eq:confocal}.	
\end{corollary}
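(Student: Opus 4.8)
The plan is to derive the Cayley-type determinants from the two divisor conditions of Theorem~\ref{th:divisor-magic1} by the same Riemann--Roch and Taylor-coefficient mechanism used in \cite{DR2019}, treating the even and odd cases separately on the curve \eqref{eq:curve}. The setup is as follows: the function $x$ is a local parameter at $Q_0=(0,\sqrt{ab\beta})$, and along the branch of \eqref{eq:curve} passing through $Q_0$ one has $y=\sqrt{(a-x)(b-x)(\beta-x)}=B_0+B_1x+B_2x^2+\cdots$, while $\sqrt{(a-x)(b-x)(\beta-x)}/(b-x)=C_0+C_1x+\cdots$ is the second series of the statement. The key translation, valid on a genus-one curve, is that for an effective divisor $E$ of degree $n$ a relation $nQ_0\sim E$ holds iff there is a nonzero $f$ with $\mathrm{div}(f)=nQ_0-E$, equivalently a nonzero $f\in L(E)$ with $\mathrm{ord}_{Q_0}(f)\ge n$ (degree reasons then force equality of the divisors).

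\emph{Even $n$.} Here $Q_\infty$ is the unique point at infinity of \eqref{eq:curve}, and $nQ_0\sim nQ_\infty$ asks for a nonzero $f\in L(nQ_\infty)$ vanishing to order $n$ at $Q_0$. The space $L(nQ_\infty)$ consists of the polynomials $p(x)+q(x)y$ with $\deg p\le n/2$ and $2\deg q+3\le n$ (as $x$, $y$ have poles of orders $2$, $3$ at $Q_\infty$), so the vanishing condition reads $p(x)+q(x)(B_0+B_1x+\cdots)=O(x^n)$. Since $p$ can absorb all coefficients in degrees $0,\dots,n/2$, this reduces to the vanishing of the coefficients of $q(x)(B_0+B_1x+\cdots)$ in degrees $n/2+1,\dots,n-1$ --- a homogeneous linear system in the $n/2-1$ coefficients of $q$ whose coefficient matrix, after reversing the column order, is precisely the Hankel matrix of \eqref{eq:cayley-even}. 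A nonzero $q$ exists iff that determinant vanishes; one still checks that a nonzero $q$ produces a nonzero $f$ (its $y$-part survives) and that any $f$ realising the divisor has $q\ne0$ (a polynomial of degree $\le n/2$ cannot vanish to order $n$ at $x=0$), so the equivalence is genuine.

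\emph{Odd $n$ with $\C_\beta$ a hyperbola.} Now $Q_b=(b,0)$ is a branch point, hence $2$-torsion, so $2Q_b\sim 2Q_\infty$ (already used in the proof of Theorem~\ref{th:divisor-magic1}); since $n-1$ is even, $nQ_0\sim nQ_b$ becomes $nQ_0\sim(n-1)Q_\infty+Q_b$. So one needs a nonzero $f\in L((n-1)Q_\infty+Q_b)$ vanishing to order $n$ at $Q_0$. This space is spanned by $L((n-1)Q_\infty)$ together with $y/(x-b)$, so its elements have the form $p(x)+Q(x)\,y/(x-b)$ with $\deg p\le(n-1)/2$ and $\deg Q\le(n-3)/2$; along the branch through $Q_0$ this equals $p(x)-Q(x)\cdot\big(\sqrt{(a-x)(b-x)(\beta-x)}/(b-x)\big)=p(x)-Q(x)(C_0+C_1x+\cdots)$, which is exactly why the second generating function appears. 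As before $p$ absorbs degrees $0,\dots,(n-1)/2$, and the remaining conditions --- vanishing of the coefficients of $Q(x)(C_0+C_1x+\cdots)$ in degrees $(n+1)/2,\dots,n-1$ --- form a homogeneous system in the $(n-1)/2$ coefficients of $Q$ whose matrix, up to reversing columns, is the Hankel matrix of \eqref{eq:cayley-odd}; it is singular iff such a nonzero $Q$ exists, and the same ``$Q=0$ is impossible'' argument closes the equivalence.

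\emph{Main obstacle.} The even case is essentially bookkeeping once $L(nQ_\infty)$ is described. The delicate point is the odd case, where $Q_b$ is a finite branch point rather than the point at infinity: one must invoke $2Q_b\sim2Q_\infty$ to recast the condition in terms of functions with prescribed poles, correctly identify $L((n-1)Q_\infty+Q_b)$ with its extra generator $y/(x-b)$, and recognise the resulting series as $\sqrt{(a-x)(b-x)(\beta-x)}/(b-x)$. Beyond that, the care is in pinning down the index ranges so that the Hankel blocks come out exactly as displayed in \eqref{eq:cayley-even} and \eqref{eq:cayley-odd}, and in checking, in both cases, that a kernel vector of the Hankel matrix yields a genuinely nonzero rational function realising the relevant divisor class, and conversely.
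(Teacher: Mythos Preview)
Your argument is correct and is precisely the Riemann--Roch/Taylor-coefficient mechanism that the paper invokes by citing \cite{DR2019}; the paper itself does not spell out a proof of this corollary beyond that reference. Your handling of the odd case --- trading $nQ_b$ for $(n-1)Q_\infty+Q_b$, identifying the extra generator $y/(x-b)$, and reading off the $C_j$-series --- is exactly the step that produces \eqref{eq:cayley-odd}, and your index bookkeeping matches the displayed Hankel blocks.
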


The periodicity conditions can be equivalently stated in the form of polynomial equations.
Again, for details on how to obtain those equations, refer to \cite{DR2019}.
A more general theory connecting polynomial Pell's equations and integrable billiards in arbitrary dimension is given in \cite{DR2018}.
\begin{corollary}\label{cor:magic-pol2}
The trajectories of the magic billiard with flipping over the long axis with caustic $\C_{\beta}$ are $n$-periodic if and only if there exists a pair of real polynomials $p_{d_1}$, $q_{d_2}$ of degrees $d_1$, $d_2$ respectively, and satisfying the following:
	\begin{itemize}
		\item[(a)] if $n=2m$ is even,  then $d_1=m$, $d_2=m-2$, and
		\begin{equation}\label{eq:pol1}
		p_{m}^2(s)
		-
		s\left(s-\frac1a\right)\left(s-\frac1b\right)\left(s-\frac1{\beta}\right)
		{q}_{m-2}^2(s)=1;
		\end{equation}
		\item[(b)] if $n=2m+1$ is odd, then $d_1=m$, $d_2=m-1$, and
		$$
		\left(s-\frac1{b}\right)p_m^2(s)
		-
		s\left(s-\frac1a\right)\left(s-\frac1{\beta}\right)q_{m-1}^2(s)=-1.
		$$
	\end{itemize}	
\end{corollary}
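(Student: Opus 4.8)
The plan is to translate the divisor-theoretic periodicity conditions of Theorem~\ref{th:divisor-magic1} into statements about polynomial Pell's equations, following the dictionary established in \cite{DR2019} and, in the higher-dimensional setting, \cite{DR2018}. The starting observation is that the substitution $x\mapsto s=1/x$ turns the affine curve \eqref{eq:curve} into the curve $y^2 = s(s-\tfrac1a)(s-\tfrac1b)(s-\tfrac1\beta)$ (up to a power of $s$ absorbed into $y$), with the point $Q_\infty$ going to $s=0$, the point $Q_0$ going to one of the branch points at infinity, and $Q_b$ going to $s=1/b$. Functional (Pell-type) equations of the shape appearing in (a) and (b) are exactly the algebraic encoding of the existence of a function on such a hyperelliptic (here elliptic) curve with prescribed divisor of zeros and poles supported at these special points; the degree $d_1$ of $p$ counts the order of the pole at the point at infinity, which is precisely $n$ measured in the elliptic coordinate that wraps $n$ times.

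The key steps, in order, are: (i) perform the projective change of coordinate $s=1/x$ and normalise so that the three finite branch points become $0,\tfrac1a,\tfrac1b,\tfrac1\beta$ and identify the images of $Q_0,Q_b,Q_\infty$; (ii) recall (from \cite{DR2019}) that for the even case the periodicity condition $nQ_0\sim nQ_\infty$ is equivalent to the existence of a meromorphic function with a pole of order exactly $n=2m$ at the point over $s=\infty$ and a zero of order $n$ at the point over $s=0$, and that writing such a function as $p_m(s)+\sqrt{R(s)}\,q(s)$ with $R(s)=s(s-\tfrac1a)(s-\tfrac1b)(s-\tfrac1\beta)$ and taking the norm yields $p_m^2 - R\,q^2 = \mathrm{const}$, the constant being normalisable to $1$; a degree count on $R$ (degree $4$) against $p_m^2$ (degree $2m$) forces $\deg q = m-2$, giving \eqref{eq:pol1}; (iii) for the odd case, use that the relevant relation is $nQ_0\sim nQ_b$ with $\C_\beta$ a hyperbola, which after the coordinate change becomes the requirement of a function with pole of order $n=2m+1$ at infinity and zero of order $n$ at $s=1/b$; the natural ansatz is now $\sqrt{s-\tfrac1b}\bigl(p_m(s)+\sqrt{R_0(s)}\,q(s)\bigr)$ with $R_0(s)=s(s-\tfrac1a)(s-\tfrac1\beta)$, and taking the norm gives $(s-\tfrac1b)p_m^2 - R_0\,q^2 = \mathrm{const}$; here the sign of the constant is fixed to $-1$ by evaluating at a branch point or by a leading-coefficient/reality argument, and degree counting ($\deg R_0=3$, so $(s-\tfrac1b)p_m^2$ has degree $2m+1$ matching $R_0 q^2$ of degree $3+2(m-1)$) yields $\deg q=m-1$; (iv) conversely, check that a solution of either Pell equation produces, via the same norm construction, a function with exactly the prescribed principal divisor, hence the required linear equivalence, closing the "if and only if".

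The main obstacle I expect is not the algebraic bookkeeping but getting the \emph{sign} and \emph{value} of the constant on the right-hand side correct in each case — in particular explaining why the even case gives $+1$ while the odd case gives $-1$ — and justifying that the polynomials may be taken to be \emph{real}. The reality follows because the branch points $0,\tfrac1a,\tfrac1b,\tfrac1\beta$ are real and the divisor conditions are invariant under complex conjugation, so the Pell solution can be averaged with its conjugate; the sign is pinned down by a leading-coefficient comparison (the top-degree terms on the two sides must cancel, which is automatic, so one evaluates at a convenient point such as $s=1/b$ in the odd case, where the first term vanishes and one reads off $-R_0(1/b)q(1/b)^2 = \mathrm{const}$, whose sign is determined by the sign of $R_0(1/b)$ under the hypothesis that $\C_\beta$ is a hyperbola, i.e. $0<\beta<b$). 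A secondary subtlety is ensuring the degrees $d_1,d_2$ are exactly as claimed rather than merely bounded: this requires noting that the pole order at infinity cannot drop (otherwise the trajectory would close up in fewer steps, contradicting that $n$ is the minimal period, or—if one allows non-minimal $n$—that the construction still produces polynomials of the stated degrees after the identity is cleared of common factors). Apart from these points, everything reduces to the computations already carried out in \cite{DR2019}, to which we refer for the details.
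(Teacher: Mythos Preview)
Your proposal is correct and follows essentially the same route as the paper, which does not give an independent proof but simply refers to \cite{DR2019} (and \cite{DR2018}) for the translation of the divisor conditions of Theorem~\ref{th:divisor-magic1} into polynomial Pell equations via the change of variable $s=1/x$. One small slip: after the substitution the image of $Q_0$ is one of the two \emph{ordinary} points over $s=\infty$ on the quartic model, not a branch point (the branch points are at $s=0,\tfrac1a,\tfrac1b,\tfrac1\beta$); this does not affect your argument.
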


We conclude this section by a topological description of the system.

\begin{theorem}\label{th:fomenko-magic1}
The Fomenko-Zieschang invariant of the magic billiard with flipping over the long axis is shown in Figure \ref{fig:fom-magic1}.
\end{theorem}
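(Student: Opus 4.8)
The plan is to recognize the isoenergy $3$-manifold $Q^3$ of this magic billiard, together with its Liouville foliation, as that of a topological billiard glued from pieces bounded by confocal conics, and then transport the already available marked molecule, double-checking it by a direct Fomenko--Zieschang computation. By Remark~\ref{rem:flip}, reflecting one half of the elliptical table over the long axis and gluing the two elliptical boundary arcs by the natural isometry turns the magic billiard with flipping over the long axis into one of the topological billiards $\Delta_\beta(A_2')^2_{2x}$, $\Delta_\beta(A_1)^2_{2y}$ of \cite{Fokicheva2015}. Since, by the discussion preceding Remark~\ref{rem:configuration}, the equations of motion in elliptic coordinates coincide with those of the standard billiard (in particular with the billiard in the half-ellipse, cf.\ Figure~\ref{fig:half}), this gluing is a fibrewise homeomorphism intertwining the two Liouville foliations; hence the two systems are Liouville equivalent and share the same Fomenko--Zieschang invariant, which for the relevant $\Delta_\beta$ is computed in \cite{Fokicheva2015}. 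This already yields Figure~\ref{fig:fom-magic1}; the rest of the proof is an independent check.

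For the direct verification I would run the standard procedure. \textbf{Step 1 (bifurcation diagram).} The additional first integral is the caustic parameter $\beta$ of \eqref{eq:confocal}, ranging over $(0,a)$; by \eqref{eq:elliptic-dif} the foliation is regular over $\beta\in(0,b)$ (caustic an ellipse) and over $\beta\in(b,a)$ (caustic a hyperbola), the only interior singular value is the focal one $\beta=b$, and the limits $\beta\to 0$ (trajectory along $\E$) and $\beta\to a$ (oscillation along the short axis) give the terminal vertices of the molecule. \textbf{Step 2 (atoms).} Over regular values the fibres are $2$-tori (possibly several per value of $\beta$). The focal fibre $\beta=b$ consists of trajectories through the foci; as the foci lie on the long axis, which is fixed by $\varphi$, this fibre is mapped to itself by the magic identification, and its $3$-atom is the quotient, by the identification induced by $\varphi,\varphi^*$, of the focal atom of the standard elliptical billiard (whose molecule is known, e.g.\ from \cite{Fokicheva2015}). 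Here I would use Remarks~\ref{rem:every2}--\ref{rem:even}: every second segment of a magic trajectory coincides with the standard billiard while the intervening one is its $\varphi$-image, which pins down how the critical circles and the boundary tori of the atom are glued. \textbf{Step 3 (graph).} Combining Steps~1--2 gives the underlying graph of the molecule.

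\textbf{Step 4 (marks).} On each edge I would choose an admissible coordinate system from the cycles furnished by the elliptic coordinates --- the $\lambda_1$- and $\lambda_2$-cycles attached to the two square roots in \eqref{eq:elliptic-dif}, i.e.\ the cycles on the elliptic curve \eqref{eq:curve} already used in Theorem~\ref{th:divisor-magic1} --- and then track how $\varphi^*$ acts on this basis across the boundary gluing. The resulting gluing matrices give the marks $r$, the image of the vanishing cycle of the saddle atom gives the marks $\varepsilon$, and, the graph being a tree with no $r=\infty$ edges, a single compatible orientation gives the mark $n$. As consistency checks, the computed invariant should restrict, on the ``even'' part of the foliation, to the known Fomenko--Zieschang invariant of the standard elliptical billiard (\cite{DR2009}, \cite{DR2011}, \cite{Fokicheva2015}), and the odd-period condition $nQ_0\sim nQ_b$ of Theorem~\ref{th:divisor-magic1} (as opposed to $nQ_0\sim nQ_\infty$) should reflect the modification of the marks produced by the flip.

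The main obstacle is Step~4, together with the precise claim in the first paragraph. One must verify that the reflect-and-glue of Remark~\ref{rem:flip} is a genuine Liouville equivalence rather than merely a homeomorphism of billiard tables, and --- above all --- keep exact track of the fact that $\varphi$ is an \emph{orientation-reversing} isometry of the boundary, so that its lift $\varphi^*$ acts with a sign change on some of the cycles entering the admissible bases. This sign bookkeeping is exactly what fixes the marks $\varepsilon$, and possibly the type of the central $3$-atom (whether it stays the same atom as for the ellipse, or is doubled/collapsed by the quotient); an error there would produce a different marked molecule. A secondary subtlety is to pin down the central atom by a purely local analysis of the focal fibre in the quotient, independently of the reference to \cite{Fokicheva2015}.
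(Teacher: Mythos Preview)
Your approach is essentially the paper's: both arguments ultimately rest on the identification of the magic billiard with a topological billiard from \cite{Fokicheva2015} (your Remark~\ref{rem:flip} paragraph is exactly how the paper handles the numerical marks), supplemented by some direct analysis of the foliation. So the overall strategy is correct and matches the paper.

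The difference is in how the direct part is executed. Where your Step~2 describes the focal atom abstractly as ``the quotient, by the identification induced by $\varphi,\varphi^*$, of the focal atom of the standard elliptical billiard'' and leaves its type (and the torus counts over regular $\beta$) to be resolved later, the paper simply counts. Over an elliptic caustic the level set is a \emph{single} torus (four annuli glued as in Figure~\ref{fig:glue1}); over a hyperbolic caustic there are \emph{two} tori, distinguished by whether every segment points upward or downward --- this is what forces the graph to have one edge for $\lambda<b$ and two for $\lambda>b$. At $\beta=b$ there is exactly one closed orbit (horizontal motion on the long axis) and two separatrices (all-upward vs.\ all-downward trajectories through the foci), so by the complexity-$1$ classification in \cite{BF2004} the atom is $\mathbf{B}$; at $\beta=a$ the upward and downward short-axis motions are \emph{distinct} closed orbits, giving two $\mathbf{A}$-atoms. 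This elementary counting disposes of the ``secondary subtlety'' you flag and pins down the rough molecule without any quotient-of-atoms reasoning; the marks are then imported from \cite{Fokicheva2015} exactly as you propose.
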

\begin{figure}[h]
	\centering
	\begin{tikzpicture}
		\tikzset{vertex/.style = {shape=circle,draw,minimum size=1.5em}}
		% vertices
		\node[vertex] (aa) at  (0,0) {$\mathbf{A}$};
		\node[vertex] (bb) at  (4,0) {$\mathbf{B}$};
		\node[vertex] (ee) at  (8,1.5) {$\mathbf{A}$};
		\node[vertex] (jj) at  (8,-1.5) {$\mathbf{A}$};
		%edges
		\path[-{Stealth[length=3mm]}] 
		(bb) edge node[above]{$r=0$} 
				  node[below]{$\varepsilon=1$}	(aa) %top-left edge
		(bb) edge node[above,sloped]{$r=\dfrac12$} 
		node[below,sloped]{$\varepsilon=1$}
		(ee) % top-right edge
		(bb) edge node[below,sloped]{$r=\dfrac12$} 
		node[above,sloped]{$\varepsilon=1$}(jj); % bottom-right edge
		
		%axes
		\draw (0,-3) -- (8,-3);
		\foreach \x in {0,4,8}
		\draw[shift={(\x,-3)},color=black] (0pt,3pt) -- (0pt,-3pt);
		\node[below] at (0,-3) { $\lambda=0$};
	%	\node[below] at (2,-3) { $\lambda=\beta_2$};
		\node[below] at (4,-3) { $\lambda=b$};
		\node[below] at (8,-3) { $\lambda=a$};
		
		\draw[black, dashed]
		(bb) circle (25pt);
		\node at (4,-1.2) {$n=-2$};
	\end{tikzpicture}
	\caption{Theorem \ref{th:fomenko-magic1}: Fomenko graph for the magic billiard with flipping over the long axis.}
	\label{fig:fom-magic1}
\end{figure}
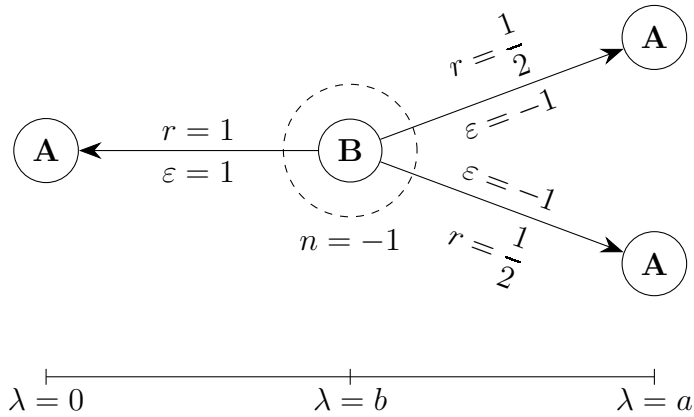
\begin{proof}
The billiard table consists of the boundary ellipse $\E$ and its interior. 
According to Remark \ref{rem:configuration}, the configuration space is obtained when the points of $\E$ which are symmetric to each other with respect to the long axis are identified.
Thus, the configuration space is homeomorphic to a sphere, where the boundary of the billiard table is represented by an arc.

Any level set corresponding to the trajectories having a fixed ellipse as caustic consists of a single Liouville torus, see Figure \ref{fig:glue1}.
	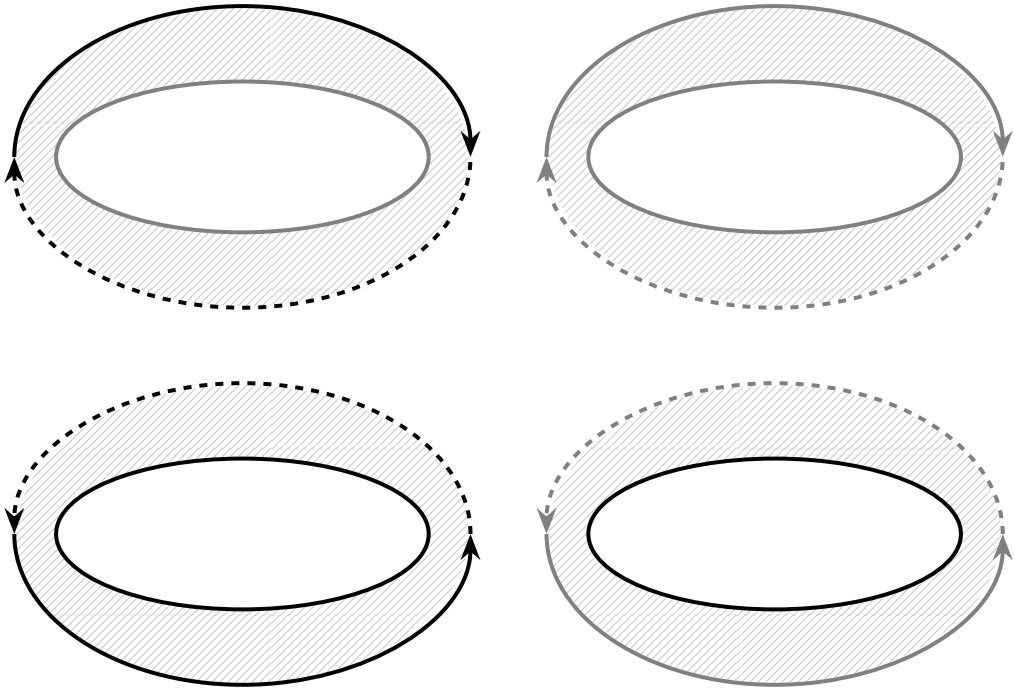
\begin{figure}[h]
	\centering
	\begin{tikzpicture}
		\draw [line width=1.5pt,color=white,pattern=north east lines,pattern color=gray!40] (0.,0.) ellipse (3.cm and 2.cm);
		
		\draw[line width=1.5pt, black, Stealth-] (3,0) arc
		(0:180:3cm and 2cm);
		
		\draw[line width=1.5pt, black, dashed, Stealth-] (-3,0) arc
		(180:360:3cm and 2cm);
		
		\draw [line width=1.5pt, 
		color=gray, fill=white, fill opacity=1.0] (0.,0.) ellipse (2.45cm and 1.cm);
		\begin{scope}[shift={(7,0)}]
			\draw [line width=1.5pt,color=white,pattern=north east lines,pattern color=gray!40] (0.,0.) ellipse (3.cm and 2.cm);
			
			\draw[line width=1.5pt, gray, Stealth-] (3,0) arc
			(0:180:3cm and 2cm);
			
			\draw[line width=1.5pt, gray, dashed, Stealth-] (-3,0) arc
			(180:360:3cm and 2cm);
			
			\draw [line width=1.5pt, 
			color=gray, fill=white, fill opacity=1.0] (0.,0.) ellipse (2.45cm and 1.cm);
		\end{scope}
		
		\begin{scope}[shift={(0,-5)}]
			\draw [line width=1.5pt,color=white,pattern=north east lines,pattern color=gray!40] (0.,0.) ellipse (3.cm and 2.cm);
			
			\draw[line width=1.5pt, black, dashed, -Stealth] (3,0) arc
			(0:180:3cm and 2cm);
			
			\draw[line width=1.5pt, black, -Stealth] (-3,0) arc
			(180:360:3cm and 2cm);
			
			\draw [line width=1.5pt, 
			color=black, fill=white, fill opacity=1.0] (0.,0.) ellipse (2.45cm and 1.cm);
		\end{scope}
		
		\begin{scope}[shift={(7,-5)}]
			\draw [line width=1.5pt,color=white,pattern=north east lines,pattern color=gray!40] (0.,0.) ellipse (3.cm and 2.cm);
			
			\draw[line width=1.5pt, gray, dashed, -Stealth] (3,0) arc
			(0:180:3cm and 2cm);
			
			\draw[line width=1.5pt, gray, -Stealth] (-3,0) arc
			(180:360:3cm and 2cm);
			
			\draw [line width=1.5pt, 
			color=black, fill=white, fill opacity=1.0] (0.,0.) ellipse (2.45cm and 1.cm);
		\end{scope}

	\end{tikzpicture}
	\caption{The Liouville torus corresponding to an ellipse as caustic of magic billiard with flipping over the long axis is obtained by gluing four annuli along congruent arcs of the same color and texture.}	\label{fig:glue1}
\end{figure}

The level set corresponding to the trajectories having a fixed hyperbola as caustic consists of two Liouville tori: one torus contains the trajectories where motion along each segment is downwards (as shown in the lefthandside of Figure \ref{fig:magic1}), and on the other torus the motion is upwards.

The level set corresponding to the caustic $\E=\C_0$ consists of the limit motion back and forth along the boundary.
This is a single closed trajectory, thus corresponding to the Fomenko atom $\mathbf{A}$.

The level set with the caustic $\C_a$ corresponds to the motion along the short axis of the ellipse $\E$.
When the particle hits the boundary, it is magically flipped to the opposite point on the axis.
Thus, there are two closed trajectories there: the downwards one and the upwards one.
Each of those trajectories corresponds to one Fomenko atom of type $\mathbf{A}$.

The level set $\C_b$ consists of the trajectories that contain the foci of the ellipse $\E$.
Exactly one of those trajectories is closed, corresponding to horizontal motion along the long axis, i.e.~the level set is of complexity $1$.
All other trajectories alternately pass through the one and the other focus, and on each such trajectory, because of the flipping along the long axis, the motion is either upwards along each segment or downwards along each segment.
Thus, there are two separatrices on that level set.

According to the classification of Fomenko atoms from \cite{BF2004}, there are three types of atoms of complexity $1$: atom $\mathbf{A}$ consisting of a single closed orbit without separatrices, atom $\mathbf{A^{*}}$ which has one separatrix, and atom $\mathbf{B}$ with two separatrices.
From there, the level set $\C_b$ must be $\mathbf{B}$, which is consistent with the number of connected components of regular level sets close to $\C_b$ -- that number is $2$ for $\lambda>b$ and $1$ for $\lambda<b$.
Thus, the rough Liouville equivalence class of this billiard is represented by the graph in Figure \ref{fig:fom-magic1}.

In order to get the numerical invariants, notice that the Liouville torus corresponding to the motion where an ellipse is a caustic is obtained by gluing four copies of the annulus between the billiard boundary and the the caustic, as shown in Figure \ref{fig:glue1}, while the two Liouville tori on the level set corresponding to a hyperbola as caustic are obtained by gluing two pairs of regions within the ellipse which is between the branches of hyperbola.
Choosing the coordinate systems on those tori as explained in \cite{BF2004},  one can calculate the numerical marks which are shown in Figure \ref{fig:fom-magic1}.
For the details of the calculation, check \cite{Fokicheva2015}, the domain $\Delta_{\beta}(A_2')^2_{2y}$.
\end{proof}

\subsection{Elliptic billiard with flipping over short axis}

In this case, with $\varphi$ is the reflection with respect to the short axis.
	Examples of its trajectories are depicted in Figure \ref{fig:magic2}.
\begin{figure}[h]
	\centering
	\begin{tikzpicture}[>=Stealth]
		\coordinate (A1) at (0.2995,1.99001);
		\coordinate (A2) at (-2.97804,0.241574);
		\coordinate (A2') at (2.97804,0.241574);
		\coordinate (A3) at (1.14631,-1.84824);
		\coordinate (A3') at (-1.14631,-1.84824);
		\coordinate (A4) at (2.88779,-0.541882);
		\coordinate (A4') at (-2.88779,-0.541882);
		\coordinate (A5) at (-1.84042,1.57943);
		\coordinate (A5') at (1.84042,1.57943);
		\coordinate (A6) at (-2.70702,0.862046);
		\coordinate (A6') at (2.70702,0.862046);

		\draw [very thick,color=gray] (0.,0.) ellipse (3 and 2);
		\draw [thick,color=gray] (0.,0.) ellipse (2.54951 and 1.22474);

		\draw [->,line width=1.5pt] (A1)-- (A2);
		\draw [->,line width=1.5pt] (A2')--(A3);
		\draw [->,line width=1.5pt] (A3')--(A4);
		\draw [->,line width=1.5pt] (A4')--(A5);
		\draw [->,line width=1.5pt] (A5')--(A6);
		
		\draw[black, fill=black]
		(A1) circle (2pt) node[above] {$A_1$};
		\draw[black, fill=black]
		(A2) circle (2pt) node[left] {$A_2$};
		\draw[black, fill=black]
		(A2') circle (2pt) node[right] {$A_2'$};
		\draw[black, fill=black]
		(A3) circle (2pt) node[below] {$A_3$};
		\draw[black, fill=black]
		(A3') circle (2pt) node[below] {$A_3'$};
		\draw[black, fill=black]
		(A4) circle (2pt) node[right] {$A_4$};
		\draw[black, fill=black]
		(A4') circle (2pt) node[left] {$A_4'$};
		\draw[black, fill=black]
		(A5) circle (2pt) node[above] {$A_5$};
		\draw[black, fill=black]
		(A5') circle (2pt) node[above] {$A_5'$};
		\draw[black, fill=black]
		(A6) circle (2pt) node[above left] {$A_6$};
		
		\begin{scope}[shift={(8,0)}]
			
			\coordinate (B1) at (-1.19522, 1.83442);
			\coordinate (B2) at (-1.85594, -1.57134);
			\coordinate (B2') at (1.85594, -1.57134);
			\coordinate (B3) at (-1.41241, 1.76448);
			\coordinate (B3') at (1.41241, 1.76448);
			\coordinate (B4) at (1.72475, -1.63643);
			\coordinate (B4') at (-1.72475, -1.63643);
			\coordinate (B5) at (1.59857, 1.69241);
			
			\coordinate (B3a) at ($ (B2')!8.0/10!(B3) $);
			\coordinate (B4a) at ($ (B3')!8.0/10!(B4) $);
			\coordinate (B5a) at ($ (B4')!8.0/10!(B5) $);

			\draw [very thick,color=gray] (0.,0.) ellipse (3 and 2);
			
			\draw[thick, gray, domain=-2:2,smooth,variable=\y]
			plot ({sqrt(2.5+5/3*\y*\y)},{\y});
			\draw[thick, gray, domain=-2:2,smooth,variable=\y]
			plot ({-sqrt(2.5+5/3*\y*\y)},{\y});

			\draw [->,line width=1.5pt] (B1)-- (B2);
			\draw [->,line width=1.5pt] (B2')--(B3a);
			\draw[line width=1.5pt](B2')--(B3);
			\draw [->,line width=1.5pt] (B3')--(B4a);
			\draw [line width=1.5pt] (B3')--(B4);
			\draw [->,line width=1.5pt] (B4')--(B5a);
			\draw [line width=1.5pt] (B4')--(B5);
			
			\draw[black, fill=black]
			(B1) circle (2pt) node[above] {$B_1$};
			\draw[black, fill=black]	(B2) circle (2pt) node[below left] {$B_2$};
			\draw[black, fill=black](B2') circle (2pt) node[below right] {$B_2'$};
			\draw[black, fill=black](B3) circle (2pt) node[above left] {$B_3$};
			\draw[black, fill=black](B3') circle (2pt) node[above] {$B_3'$};
			\draw[black, fill=black](B4) circle (2pt) node[below] {$B_4$};
			\draw[black, fill=black](B4') circle (2pt) node[below] {$B_4'$};
			\draw[black, fill=black](B5) circle (2pt) node[above right] {$B_5$};
		\end{scope}

	\end{tikzpicture}
	\caption{Magic billiard with flipping over the short axis of the ellipse: each time when the particle hits the boundary, it is reflected with respect to the boundary and immediately magically flipped over the short axis.
	}	\label{fig:magic2}
\end{figure}
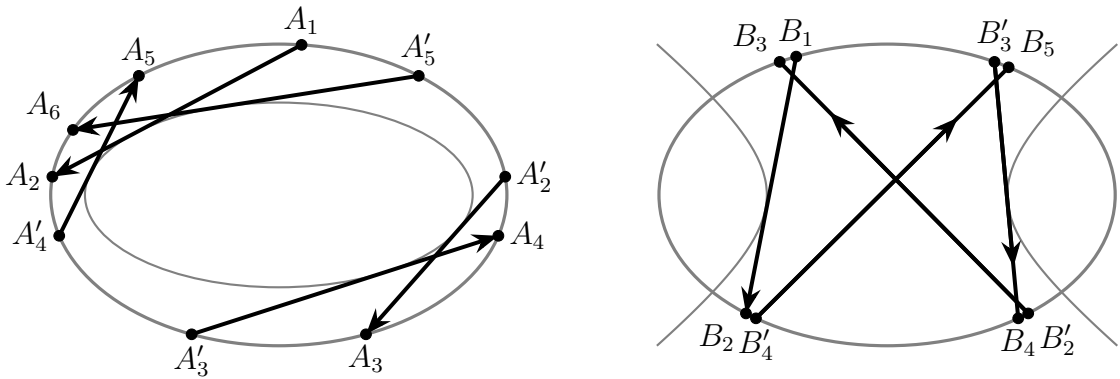

Now, we derive the algebro-geometric condition for periodicity of the such magic billiard.

\begin{theorem}\label{th:divisor-magic2}
In the billiard table bounded by the ellipse $\E$ given by \eqref{eq:ellipse}, consider the magic billiard with flipping over the short axis.
Consider a trajectory of such billiard with the caustic $\C_{\beta}$ from the confocal family \eqref{eq:confocal}.
Such a trajectory
is $n$-periodic if and only if $n$ is even and $nQ_0\sim nQ_{\infty}$.
	
Here $Q_0$ and $Q_{\infty}$ are as in Theorem \ref{th:divisor-magic1}.
\end{theorem}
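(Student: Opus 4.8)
\emph{Approach.} The plan is to run the proof of Theorem~\ref{th:divisor-magic1} essentially verbatim, interchanging the roles of the two axes of $\E$. Since the magic billiard with flipping over the short axis is, by construction, governed in elliptic coordinates by the same equation \eqref{eq:elliptic-dif} as the standard elliptical billiard, integrating \eqref{eq:elliptic-dif} along an $n$-periodic trajectory produces, exactly as in the proof of Theorem~\ref{th:divisor-magic1}, the divisor relation
\[
  n(Q_0-Q_{c_1})+m(Q_{c_2}-Q_a)\sim 0
\]
on the curve \eqref{eq:curve}, where $\{c_1,c_2\}=\{\beta,b\}$ with $c_1<c_2$, the natural numbers $n$ and $m$ count the back-and-forth passes of $\lambda_1$ over $[0,c_1]$ and of $\lambda_2$ over $[c_2,a]$, and $n$ is the number of segments of the trajectory.

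\emph{Parity of $n$ and $m$.} The decisive point is to pin down the parities of $n$ and $m$. Now the long axis of $\E$ is the degenerate conic $\C_b$ and is \emph{not} a flipping axis, so a closed trajectory must meet the line $\C_b$ an even number of times; that number equals $m$ when $\C_\beta$ is an ellipse (the trajectory crosses the long axis outside the foci, i.e.\ at $\lambda_2=b$) and $n$ when $\C_\beta$ is a hyperbola (it crosses the focal segment, i.e.\ at $\lambda_1=b$, and every one of the $n$ segments, being tangent to $\C_\beta$, does so exactly once). On the other hand, each of the $n$ reflections is followed by a flip over the short axis $\C_a$, and such a flip switches the side of $\C_a$ exactly as an ordinary crossing does; hence the number of flips plus the number of ordinary crossings of the short axis, which is $n+m$, must also be even. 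Combining: if $\C_\beta$ is an ellipse then $m$ is even, whence $n$ is even; if $\C_\beta$ is a hyperbola then $n$ is even, whence $m$ is even. In every case both $n$ and $m$ are even, so in particular no trajectory of this magic billiard is periodic with odd period.

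\emph{Conclusion and the hard point.} With $n$ and $m$ both even, the fact that $Q_\beta$, $Q_b$, $Q_a$, $Q_\infty$ constitute the $2$-torsion of \eqref{eq:curve} (so that $2Q_{c_1}\sim 2Q_{c_2}\sim 2Q_a\sim 2Q_\infty$) annihilates the $m$-term and reduces the displayed relation to $nQ_0\sim nQ_\infty$; conversely, for even $n$ this is, by Remark~\ref{rem:even} and \cite{DR2019}, precisely the $n$-periodicity condition of the standard elliptical billiard and therefore of the magic one. This gives the asserted equivalence, with $Q_0$ and $Q_\infty$ as in Theorem~\ref{th:divisor-magic1}. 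I expect the genuinely delicate part to be the parity count of the middle paragraph — in particular, checking carefully that a flip over the short axis reverses the side with respect to that axis, and that the intersections with the long axis are counted correctly against the oscillations of $\lambda_1$ and $\lambda_2$ near the degenerate values $\lambda=b$ and $\lambda=a$ (dealing with possible tangencies and with vertices lying on an axis). Once this is settled, the rest is formal manipulation in $\Jac$ of \eqref{eq:curve}, parallel to \cite{DR2019}.
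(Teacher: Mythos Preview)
Your proof is correct and follows essentially the same route as the paper's: derive the divisor condition \eqref{eq:divisor-cayley} as in Theorem~\ref{th:divisor-magic1}, then use the two parity constraints (even number of crossings of the long axis $\C_b$, giving $m$ even for an ellipse caustic and $n$ even for a hyperbola caustic; and evenness of $n+m$ from the flip/crossing count at the short axis $\C_a$) to force both $n$ and $m$ even, whence the $m$-term drops out by $2$-torsion. Your write-up is in fact slightly more explicit than the paper's about why the crossing counts are $m$ versus $n$ in the two caustic regimes and about the converse via Remark~\ref{rem:even}.
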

\begin{proof}
We will use notation as in Theorem \ref{th:divisor-magic1} and its proof.
In the same way as explained there, we can derive the divisor condition \eqref{eq:divisor-cayley}.
	
	In addition to that condition, we need the following:
	\begin{itemize}
		\item the number of times that the particle crossed the long axis must be even: that number equals $m$ if the caustic is ellipse, or $n$ if it is hyperbola;
		\item the sum $n+m$ must be even, since that is the sum of the number of the flippings over the short axis and the number of times when the billiard particle actually crossed that axis.
	\end{itemize}
	In any case, we get that both $m$ and $n$ must be even, so the conditions for closure reduce to $n(Q_0-Q_{c_1})\sim0$, which is equivalent to the stated relation.
\end{proof}

\begin{remark}
It is interesting to note that in this case there are no odd-periodic trajectories.
Notice that, as announced in Remark \ref{rem:even}, the conditions for trajectories of even period are the same as in the case of flipping over the long axis, see Theorem \ref{th:divisor-magic1}, and as in the case of standard billiard, see \cite{DR2019}*{Theorem 2}.
\end{remark}

As a consequence, the analytic and polynomial conditions can be obtained as follows.

\begin{corollary}\label{cor:cayley-magic2}
In the billiard table bounded by the ellipse $\E$ given by \eqref{eq:ellipse}, consider the magic billiard with flipping over the short axis.
Consider a trajectory of such billiard with the caustic $\C_{\beta}$ from the confocal family \eqref{eq:confocal}.
Such a trajectory
is $n$-periodic if and only if $n$ is even and the following equivalent conditions are true:
\begin{itemize}
	\item relation \eqref{eq:cayley-even} is satisfied;
	\item there are polynomials satisfying the polynomial equation \eqref{eq:pol1}.
\end{itemize} 
\end{corollary}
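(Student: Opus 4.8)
The plan is to reduce the statement to results already in hand rather than to redo any algebraic geometry from scratch. First I would invoke Theorem~\ref{th:divisor-magic2}: a trajectory of the magic billiard with flipping over the short axis and caustic $\C_\beta$ is $n$-periodic if and only if $n$ is even and $nQ_0\sim nQ_\infty$ on the elliptic curve \eqref{eq:curve}. The key observation is that this is \emph{verbatim} the even-period divisor condition appearing in Theorem~\ref{th:divisor-magic1} for the billiard with flipping over the long axis (as already flagged in Remark~\ref{rem:even}). Hence the translation of that divisor relation into Cayley's determinantal form and into the polynomial Pell form --- carried out for that billiard in Corollaries~\ref{cor:cayley-magic1} and \ref{cor:magic-pol2} following \cite{DR2019} and \cite{DR2018} --- transfers word for word to the present setting, and the even-$n$ branch of those corollaries gives exactly conditions \eqref{eq:cayley-even} and \eqref{eq:pol1}.

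For the record I would then recall the translation itself. Expanding $y=\sqrt{(a-x)(b-x)(\beta-x)}=B_0+B_1x+B_2x^2+\cdots$ around $x=0$, the relation $nQ_0\sim nQ_\infty$ says precisely that there is a rational function on \eqref{eq:curve} with divisor $n(Q_0-Q_\infty)$, and the classical Cayley computation rewrites the solvability of this interpolation problem as the vanishing of the Hankel-type determinant \eqref{eq:cayley-even} in the coefficients $B_j$; with $n=2m$ this determinant has size $m-1$ with entries $B_{i+j+1}$. Equivalently, the substitution $s=1/x$ carries $Q_\infty$ to $s=0$ and the finite branch points to $s=1/a,\,1/b,\,1/\beta$, and under it the same divisor condition becomes solvability of the polynomial Pell equation \eqref{eq:pol1} in real polynomials $p_m$, $q_{m-2}$ of degrees $m$ and $m-2$; this is the dictionary between divisor conditions, Hankel determinants and polynomial Pell equations of \cite{DR2018} and \cite{DR2019}. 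Because all three conditions encode the single relation $nQ_0\sim nQ_\infty$, they are mutually equivalent, which is the asserted equivalence of \eqref{eq:cayley-even} and \eqref{eq:pol1}.

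I do not expect a genuine obstacle here: the whole content is the recognition that, for even $n$, the short-axis case and the long-axis case have identical periodicity conditions, so that the corollaries for the long-axis billiard can simply be quoted. The only thing needing care is the bookkeeping --- matching the index range in \eqref{eq:cayley-even} and the degree constraints $d_1=m$, $d_2=m-2$ with the normalisation used in \cite{DR2019} --- together with the observation that, unlike Theorem~\ref{th:divisor-magic1}, there is no odd-period branch here, which is why only \eqref{eq:cayley-even} and \eqref{eq:pol1} appear in the statement.
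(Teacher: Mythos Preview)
Your proposal is correct and matches the paper's approach: the paper presents this corollary without proof, simply as a consequence of Theorem~\ref{th:divisor-magic2} together with the translations already established in Corollaries~\ref{cor:cayley-magic1} and~\ref{cor:magic-pol2} (following \cite{DR2019} and \cite{DR2018}). Your write-up just spells out this reduction in more detail than the paper does.
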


We use Fomenko graph to give a topological description of the system.

\begin{theorem}\label{th:fomenko-magic2}
The Liouville equivalence class of the magic billiard with flipping over the long axis is given by the Fomenko graph in Figure \ref{fig:fom-magic2}.
\end{theorem}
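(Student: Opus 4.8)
The plan is to build the Liouville foliation of the isoenergy $3$-manifold directly from the confocal geometry and then read off its Fomenko--Zieschang invariant, following the same scheme as in the proof of Theorem~\ref{th:fomenko-magic1}. First I would fix the configuration and phase spaces. By Remark~\ref{rem:configuration}, the configuration space is $\mathcal{D}/\!\sim$, where the boundary points of $\E$ symmetric with respect to the long axis are identified; since the long axis meets $\E$ in two points fixed by the reflection, this identification folds the boundary circle onto an arc and the resulting surface is a sphere carrying that arc. The caustic parameter $\lambda\in[0,a]$ is the additional first integral, its level sets foliate the fixed-speed phase space, and the only critical values are $\lambda=0$, $\lambda=b$, $\lambda=a$, so the Fomenko graph has vertices over exactly these three values.

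Next I would determine the atom at each critical level together with the number of connected components of the nearby regular levels. Using the gluing description of Figure~\ref{fig:glue1}, trajectories with an ellipse as caustic ($0<\lambda<b$) assemble into a single Liouville torus, whereas those with a hyperbola as caustic ($b<\lambda<a$) split into two tori, distinguished by whether the motion along every segment is upward or downward. At $\lambda=0$ the limiting motion back and forth along the boundary is one closed orbit, hence an atom $\mathbf{A}$; at $\lambda=a$ the motion along the short axis, together with the long-axis flip that exchanges its two endpoints, produces two closed orbits, hence two atoms $\mathbf{A}$. The decisive vertex is $\lambda=b$: there the caustic degenerates to the focal segment, exactly one orbit (the oscillation along the long axis) is closed, and the flip over the long axis creates two separatrices. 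By the classification of complexity-one atoms in \cite{BF2004} this forces the atom $\mathbf{B}$, consistently with the component count (one torus for $\lambda<b$ and two for $\lambda>b$). This yields the underlying graph with $\mathbf{A}$ over $\lambda=0$, $\mathbf{B}$ over $\lambda=b$, and two copies of $\mathbf{A}$ over $\lambda=a$.

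Finally I would compute the numerical marks $r,\varepsilon$ on the three edges and the integer mark $n$ on the family containing $\mathbf{B}$. The recipe is to fix admissible bases of the homology of each boundary torus as prescribed in \cite{BF2004}: one basis cycle tracks the folded boundary identification and the other is transverse to it; the gluing matrices are then produced by the four-annulus torus on the ellipse side and by the two two-region tori on the hyperbola side. Reading these matrices gives the marks recorded in Figure~\ref{fig:fom-magic2}, and the computation can be transcribed from the confocal domain treated in \cite{Fokicheva2015} whose gluing matches the present one after reflecting half of the table in the long axis and regluing along the isometric boundary arcs, as in Remark~\ref{rem:flip}. I expect the main obstacle to be precisely this last step: pinning down the orientation conventions and the admissible bases so that the gluing matrices reproduce the correct $r$ and $\varepsilon$, and in particular the integer mark $n$ on the $\mathbf{B}$-family, rather than the qualitative identification of the atoms, which is already forced by the component count.
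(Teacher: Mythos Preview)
Your argument is clean and correct, but it proves the wrong theorem. Despite the unfortunate typo in the statement (``flipping over the long axis''), Theorem~\ref{th:fomenko-magic2} sits in the section on flipping over the \emph{short} axis, and Figure~\ref{fig:fom-magic2} is the graph $\mathbf{A}$--$\mathbf{A^{**}}$--$\mathbf{A}$ with a single torus on each side of $\lambda=b$. What you have written down---one torus for elliptic caustics, two tori for hyperbolic caustics, a complexity-one atom $\mathbf{B}$ at $\lambda=b$, and two $\mathbf{A}$ atoms at $\lambda=a$---is exactly the content of Theorem~\ref{th:fomenko-magic1} and Figure~\ref{fig:fom-magic1}. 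The mismatch with the figure you cite should have flagged the issue.

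For the short-axis flip the picture changes at every singular level. The reflection in the short axis fixes the endpoints of the short axis, so the $\lambda=a$ level is a single back-and-forth orbit, hence one $\mathbf{A}$ atom, not two. At $\lambda=b$ the reflection swaps the two endpoints of the long axis, so leftward motion and rightward motion along the long axis become two distinct closed orbits: the level has complexity~$2$, and with two separatrices the classification in \cite{BF2004} forces $\mathbf{A^{**}}$, not $\mathbf{B}$. Correspondingly, the hyperbolic regular levels consist of a \emph{single} torus (the short-axis flip exchanges the two regions between the hyperbola branches), so the component count is $1$ on both sides of $\lambda=b$, consistent with $\mathbf{A^{**}}$. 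The numerical marks then come from the domain $\Delta_{\beta}(A_1)^2_{2y}$ in \cite{Fokicheva2015}, not from the $\mathbf{B}$-family computation you outline.
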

\begin{figure}[h]
	\centering
	\begin{tikzpicture}
		\tikzset{vertex/.style = {shape=circle,draw,minimum size=1.5em}}
		% vertices
		\node[vertex] (aa) at  (0,0) {$\mathbf{A}$};
		\node[vertex] (bb) at  (4,0) {$\mathbf{A^{**}}$};
		\node[vertex] (ee) at  (8,0) {$\mathbf{A}$};
		%edges
		\path[-{Stealth[length=3mm]}] 
		(bb) edge node[above]{$r=0$} 
		node[below]{$\varepsilon=1$} (aa) %left edge
		(bb) edge node[above]{$r=0$} 
		node[below]{$\varepsilon=1$}(ee); %right edge
		\draw[black, dashed]
	(bb) circle (30pt);
	\node at (4,1.5) {$n=0$};

		%axes
		\draw (0,-1.5)--(8,-1.5);
		\foreach \x in {0,4,8}
		\draw[shift={(\x,-1.5)},color=black] (0pt,3pt) -- (0pt,-3pt);
		\node[below] at (0,-1.5) { $\lambda=0$};
		%	\node[below] at (2,-3) { $\lambda=\beta_2$};
		\node[below] at (4,-1.5) { $\lambda=b$};
		\node[below] at (8,-1.5) { $\lambda=a$};
	\end{tikzpicture}
	\caption{Theorem \ref{th:fomenko-magic2}: Fomenko graph for the magic billiard with flipping over the short axis.}
	\label{fig:fom-magic2}
\end{figure}
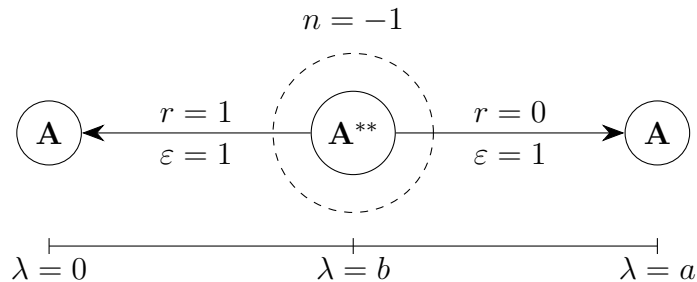
\begin{proof}
Similarly as in the proof of Theorem \ref{th:fomenko-magic1}, the configuration space is homeomorphic to a sphere, where the boundary of the billiard table is represented by an arc.
	
In the phase space, any level set corresponding to the trajectories having a fixed non-degenerate conic as caustic consists of a single Liouville torus.
		
The level set corresponding to the caustic $\E=\C_0$  corresponds to the Fomenko atom $\mathbf{A}$, similarly as in the proof of Theorem \ref{th:fomenko-magic1}.
	
The level set with the caustic $\C_a$ corresponds to the motion back and forth along the short axis of the ellipse $\E$, thus it corresponds to the Fomenko atom of type $\mathbf{A}$.
	
The level set with the caustic $\C_b$ consists of the trajectories that contain the foci of the ellipse $\E$.
First, we will show that the level set is of complexity $2$, i.e.~that it contains exactly two closed orbits.
One such orbit corresponds to the motion from left to the right along the long axis: when the particle hits the boundary, it is magically flipped to the other endpoint of the diameter, from where it continues its motion to the right.
The second closed trajectory corresponds to the motion from right to the left along the long axis.
For any other trajectory on that level set, each segment will pass through the same focus, so there are two separatrices on that level set.

The classification of Fomenko atoms of complexity $2$ from \cite{BF2004} determines that only one of them, $\mathbf{A}^{**}$, has two separatrices.
We note that this is consistent with the numbers of connected components of non-degenerate level sets which are close to $\C_b$: only $1$ for $\lambda<b$ and also $1$ for $\lambda>b$.
The details on calculation of the numerical invariants can be found in \cite{Fokicheva2015}, the domain $\Delta_{\beta}(A_1)^2_{2y}$.
\end{proof}

\subsection{Elliptic billiard with half-turn around the center}

In this case, with $\varphi$ is the half-turn about the center of the ellipse.
Such billiards are a case of the billiards with slipping, see \cite{FVZ2021}.

Examples of its trajectories are depicted in Figure \ref{fig:magic3}.
\begin{figure}[h]
	\centering
	\begin{tikzpicture}[>=Stealth]
		\coordinate (A1) at (0.2995,1.99001);
		\coordinate (A2) at (-2.97804,0.241574);
		\coordinate (A2') at (2.97804,-0.241574);
		\coordinate (A3) at (1.14631,1.84824);
		\coordinate (A3') at (-1.14631,-1.84824);
		\coordinate (A4) at (2.88779,-0.541882);
		\coordinate (A4') at (-2.88779,0.541882);
		\coordinate (A5) at (-1.84042,-1.57943);
		\coordinate (A5') at (1.84042,1.57943);
		\coordinate (A6) at (-2.70702,0.862046);
		\coordinate (A6') at (2.70702,-0.862046);
		
			\coordinate (A2a) at ($ (A1)!8.0/10!(A2) $);

		\draw [very thick,color=gray] (0.,0.) ellipse (3 and 2);
		\draw [thick,color=gray] (0.,0.) ellipse (2.54951 and 1.22474);

		\draw [->,line width=1.5pt] (A1)-- (A2a);
		\draw [line width=1.5pt] (A1)-- (A2);		
		\draw [->,line width=1.5pt] (A2')--(A3);
		\draw [->,line width=1.5pt] (A3')--(A4);
		\draw [->,line width=1.5pt] (A4')--(A5);
		\draw [->,line width=1.5pt] (A5')--(A6);
		
		\draw[black, fill=black]
		(A1) circle (2pt) node[above] {$A_1$};
		\draw[black, fill=black]
		(A2) circle (2pt) node[below left] {$A_2$};
		\draw[black, fill=black]
		(A2') circle (2pt) node[right] {$A_2'$};
		\draw[black, fill=black]
		(A3) circle (2pt) node[above] {$A_3$};
		\draw[black, fill=black]
		(A3') circle (2pt) node[below] {$A_3'$};
		\draw[black, fill=black]
		(A4) circle (2pt) node[below right] {$A_4$};
		\draw[black, fill=black]
		(A4') circle (2pt) node[left] {$A_4'$};
		\draw[black, fill=black]
		(A5) circle (2pt) node[below] {$A_5$};
		\draw[black, fill=black]
		(A5') circle (2pt) node[above] {$A_5'$};
		\draw[black, fill=black]
		(A6) circle (2pt) node[above left] {$A_6$};
		
		\begin{scope}[shift={(8,0)}]
			
			\coordinate (B1) at (-1.19522, 1.83442);
			\coordinate (B2) at (-1.85594, -1.57134);
			\coordinate (B2') at (1.85594, 1.57134);
			\coordinate (B3) at (-1.41241, -1.76448);
			\coordinate (B3') at (1.41241, 1.76448);
			\coordinate (B4) at (1.72475, -1.63643);
			\coordinate (B4') at (-1.72475, 1.63643);
			\coordinate (B5) at (1.59857, -1.69241);
			
			\coordinate (B3a) at ($ (B2')!8.0/10!(B3) $);
			\coordinate (B4a) at ($ (B3')!8.0/10!(B4) $);
			\coordinate (B5a) at ($ (B4')!8.0/10!(B5) $);

			\draw [very thick,color=gray] (0.,0.) ellipse (3 and 2);
			
			\draw[thick, gray, domain=-2:2,smooth,variable=\y]
			plot ({sqrt(2.5+5/3*\y*\y)},{\y});
			\draw[thick, gray, domain=-2:2,smooth,variable=\y]
			plot ({-sqrt(2.5+5/3*\y*\y)},{\y});

			\draw [->,line width=1.5pt] (B1)-- (B2);
			\draw [->,line width=1.5pt] (B2')--(B3);
		%	\draw[line width=1.5pt](B2')--(B3);
			\draw [->,line width=1.5pt] (B3')--(B4);
		%	\draw [line width=1.5pt] (B3')--(B4);
			\draw [->,line width=1.5pt] (B4')--(B5);
		%	\draw [line width=1.5pt] (B4')--(B5);
			
			\draw[black, fill=black]
			(B1) circle (2pt) node[above] {$B_1$};
			\draw[black, fill=black]	(B2) circle (2pt) node[below left] {$B_2$};
			\draw[black, fill=black](B2') circle (2pt) node[above right] {$B_2'$};
			\draw[black, fill=black](B3) circle (2pt) node[below] {$B_3$};
			\draw[black, fill=black](B3') circle (2pt) node[above] {$B_3'$};
			\draw[black, fill=black](B4) circle (2pt) node[below right] {$B_4$};
			\draw[black, fill=black](B4') circle (2pt) node[above left] {$B_4'$};
			\draw[black, fill=black](B5) circle (2pt) node[below] {$B_5$};
		\end{scope}

	\end{tikzpicture}
	\caption{Magic billiard with slipping by half-ellipse: each time when the particle hits the boundary, it is reflected with respect to the boundary and immediately magically transported to the diametrically opposite point.
	}	\label{fig:magic3}
\end{figure}
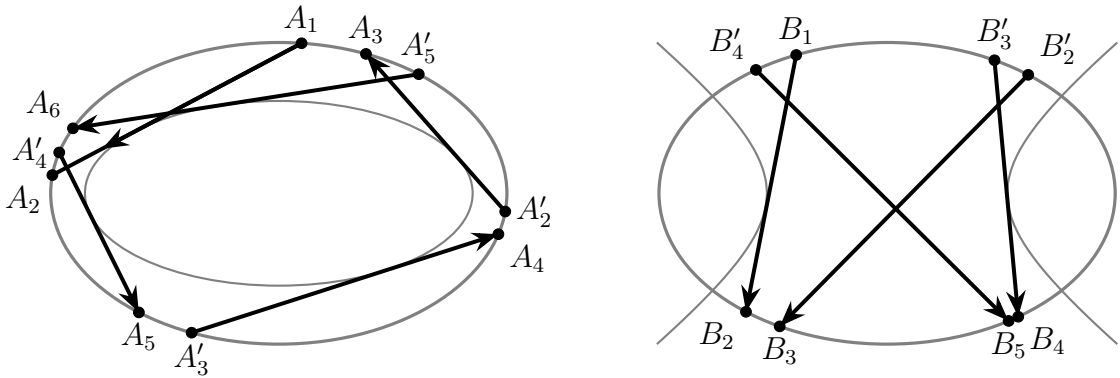

As in the previous cases, we will first derive the divisor conditions for periodicity.

\begin{theorem}\label{th:divisor-magic3}
In the billiard table bounded by the ellipse $\E$ given by \eqref{eq:ellipse}, consider the magic billiard with half-turn about the center.
Consider a trajectory of such billiard with the caustic $\C_{\beta}$ from the confocal family \eqref{eq:confocal}.
Such a trajectory
is $n$-periodic if and only if $nQ_0\sim nQ_{\infty}$.
Here $Q_0$ and $Q_{\infty}$ are as in Theorem \ref{th:divisor-magic1}.
\end{theorem}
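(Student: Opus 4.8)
The plan is to follow, step by step, the proof of Theorem~\ref{th:divisor-magic1}, since the half‑turn $\varphi$ about the centre of $\E$ and the induced map $\varphi^{*}$ leave the equation of motion \eqref{eq:elliptic-dif} unchanged. First, integrating \eqref{eq:elliptic-dif} along a trajectory that performs $n$ bounces, one obtains exactly as there that the elliptic coordinates $(\lambda_{1},\lambda_{2})$ close up after those $n$ bounces if and only if the divisor relation \eqref{eq:divisor-cayley}, $n(Q_{0}-Q_{c_{1}})+m(Q_{c_{2}}-Q_{a})\sim 0$, holds on the curve \eqref{eq:curve}, where $\{c_{1},c_{2}\}=\{b,\beta\}$, $c_{1}<c_{2}$, and $m$ is the number of times $\lambda_{2}$ traverses $[c_{2},a]$ back and forth; here $Q_{a},Q_{b},Q_{\beta}$ denote the points $(a,0),(b,0),(\beta,0)$ of \eqref{eq:curve}, as in the proof of Theorem~\ref{th:divisor-magic1}. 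To pass from closure of the elliptic coordinates to closure of the trajectory in the plane one must, in addition, demand that the signs of the Cartesian coordinates $x$ and $y$ be restored after the $n$ bounces; this is the only new ingredient compared with that computation, and it is where the half‑turn enters.

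The parity conditions I would record next are as follows. At each of the $n$ bounces the map $\varphi$ reverses both $x$ and $y$; between bounces the sign of $x$ changes precisely at a crossing of the short axis $\C_{a}$, and the sign of $y$ precisely at a crossing of the long axis $\C_{b}$. Hence a trajectory performing $n$ bounces closes in the plane if and only if \eqref{eq:divisor-cayley} holds and both $n+\#\{\text{crossings of }\C_{a}\}$ and $n+\#\{\text{crossings of }\C_{b}\}$ are even. Exactly as in the proof of Theorem~\ref{th:divisor-magic1}, the number of crossings of $\C_{a}$ equals $m$, while the number of crossings of $\C_{b}$ equals $m$ if $\C_{\beta}$ is an ellipse and equals $n$ if $\C_{\beta}$ is a hyperbola. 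In the elliptic case both requirements read ``$n+m$ even''; in the hyperbolic case the first reads ``$n+m$ even'' and the second reads ``$2n$ even'', which is automatic. So in every case the extra requirement collapses to the single congruence $n\equiv m\pmod 2$.

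It then remains to combine the divisor relation with this congruence. On the elliptic curve \eqref{eq:curve} the points $Q_{a},Q_{b},Q_{\beta}$ are the three nonzero points of order two for the group law with neutral element $Q_{\infty}$, so $2(Q_{c}-Q_{\infty})\sim 0$ for $c\in\{a,b,\beta\}$ and $(Q_{a}-Q_{\infty})+(Q_{b}-Q_{\infty})+(Q_{\beta}-Q_{\infty})\sim 0$. Rewriting \eqref{eq:divisor-cayley} as $n(Q_{0}-Q_{\infty})\sim n(Q_{c_{1}}-Q_{\infty})-m(Q_{c_{2}}-Q_{\infty})+m(Q_{a}-Q_{\infty})$ and using these relations: if $n$ and $m$ are both even, the right‑hand side is $\sim 0$; if they are both odd, then, the relevant terms being $2$‑torsion so that signs may be dropped, the right‑hand side is $\sim (Q_{c_{1}}-Q_{\infty})+(Q_{c_{2}}-Q_{\infty})+(Q_{a}-Q_{\infty})=(Q_{b}-Q_{\infty})+(Q_{\beta}-Q_{\infty})+(Q_{a}-Q_{\infty})\sim 0$. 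In either case $n(Q_{0}-Q_{\infty})\sim 0$, i.e.\ $nQ_{0}\sim nQ_{\infty}$, the asserted condition. For the converse I would run the same chain backwards: for a fixed caustic $\C_{\beta}$ the admissible pairs $(n,m)$ are governed by the rotation number on the corresponding Liouville torus, and $nQ_{0}\sim nQ_{\infty}$ together with the torsion relations forces \eqref{eq:divisor-cayley} for an admissible pair with $n\equiv m\pmod 2$, hence $n$‑periodicity.

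I expect the converse direction to be the main obstacle: one must argue that $nQ_{0}\sim nQ_{\infty}$ genuinely yields a closed magic trajectory with caustic $\C_{\beta}$, which needs the same control on the admissible $(n,m)$ and the primitivity/rotation‑number bookkeeping that underlies Theorem~\ref{th:divisor-magic1}, and the degenerate caustics $\C_{0}$, $\C_{b}$, $\C_{a}$ should be treated separately by a limiting argument. Two remarks serve as checks. First, by Remark~\ref{rem:every2} every second segment of the magic trajectory coincides with a segment of the standard elliptical billiard, so for even $n$ the condition must reduce to the classical $nQ_{0}\sim nQ_{\infty}$, and only the odd case genuinely uses the sign count above; this also explains why, unlike the two reflection cases, there is no parity restriction on $n$. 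Second, once the position and the two Cartesian signs are restored and the trajectory is traversed in a fixed time direction, the velocity is restored as well, so no further condition is hidden.
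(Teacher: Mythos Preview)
Your proof is correct and follows essentially the same approach as the paper's: derive the divisor relation \eqref{eq:divisor-cayley} by integrating \eqref{eq:elliptic-dif}, extract the parity constraint $n\equiv m\pmod 2$ from the requirement that both Cartesian signs be restored under the half-turn, and reduce to $nQ_0\sim nQ_\infty$ via the $2$-torsion identities $2Q_\infty\sim 2Q_a\sim 2Q_b\sim 2Q_\beta$ and $Q_a+Q_b+Q_\beta\sim 3Q_\infty$ on the curve \eqref{eq:curve}. The paper's proof is equally brief about the converse, so your caution there is well placed but not a point of divergence.
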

\begin{proof}
We follow the notation of Theorem \ref{th:divisor-magic1} and its proof, and, as there, we can derive the divisor condition \eqref{eq:divisor-cayley}.
	
	In addition to that condition, we need that the number of times the particle hit the boundary, the number of times it crossed the long axis, and the number of times it crossed the short axis, are all odd or all even.
	This is equivalent to $m$ and $n$ being both odd or both even.
	
	Thus, the periodicity condition for even $n$ is equivalent to $n(Q_0-Q_{c_1})\sim0$, and for odd $n$ to $n(Q_0-Q_{c_1})+(Q_{c_2}-Q_a)\sim0$.
	Using $2Q_{\infty}\sim2Q_{a}\sim2Q_b\sim2Q_{\beta}$ and $3Q_{\infty}\sim Q_a+Q_b+Q_{\beta}$, we get that the periodicity condition for any $n$ is $nQ_0\sim nQ_{\infty}$.
\end{proof}

From divisor conditions, one can derive the analytic conditions of Cayley's type and the polynomial conditions, similarly as in \cite{DR2019}.
We present them in the following two corollaries.

\begin{corollary}\label{cor:cayley-magic3}
	In the billiard table bounded by the ellipse $\E$ given by \eqref{eq:ellipse}, consider the magic billiard with the half-turn about the center.
	Consider a trajectory of such billiard with the caustic $\C_{\beta}$ from the confocal family \eqref{eq:confocal}.
	Such a trajectory
	is $n$-periodic if and only if:
	\begin{itemize}
		\item $n$ is even and \eqref{eq:cayley-even} is satisfied; or
		\item $n$ is odd and
$$
	\left|	\begin{array}{llll}
		B_2 & B_3 & \dots & B_{(n+1)/2}
		\\
		B_3 & B_4 & \dots & B_{(n+1)/2+1}
		\\
		\dots\\
		B_{(n+1)/2} & B_{(n+1)/2+1} & \dots & B_{n-1}
	\end{array}
	\right|=0.
$$
	\end{itemize} 
The coefficients $B_2$, $B_3$, \dots are as in the statement of Corollary \ref{cor:cayley-magic1}.
\end{corollary}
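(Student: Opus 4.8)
The plan is to convert the divisor relation $nQ_0\sim nQ_\infty$ furnished by Theorem \ref{th:divisor-magic3} into a rank condition on a Hankel matrix built from the coefficients $B_j$, following the classical mechanism used in \cite{DR2019}. First I would observe that on the elliptic curve \eqref{eq:curve} one has $\deg\mathrm{div}(f)=0$ for every nonzero rational function $f$, so $nQ_0\sim nQ_\infty$ holds if and only if there is a nonzero $f$ in the space $L(nQ_\infty)$ having a zero of order at least $n$ at $Q_0$: such an $f$ has pole divisor $\le nQ_\infty$ and zero divisor $\ge nQ_0$, and the degree count then forces $\mathrm{div}(f)=nQ_0-nQ_\infty$ exactly, so that $nQ_0\sim nQ_\infty$.

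Next I would write down an explicit basis of $L(nQ_\infty)$. Since $Q_\infty=(\infty,\infty)$ is the point at infinity of the cubic $y^2=(a-x)(b-x)(\beta-x)$, the function $x$ has there a pole of order $2$ and $y$ a pole of order $3$, so $L(nQ_\infty)$ is spanned by the monomials $x^i$ with $2i\le n$ together with the monomials $x^iy$ with $2i+3\le n$. Hence every candidate $f$ has the form $f=p(x)+q(x)\,y$ with $\deg p\le\lfloor n/2\rfloor$ and $\deg q\le\lfloor (n-3)/2\rfloor$; in particular $\deg p=m$, $\deg q=m-2$ when $n=2m$, and $\deg p=m$, $\deg q=m-1$ when $n=2m+1$.

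Then I would expand at $Q_0$. The point $Q_0=(0,\sqrt{ab\beta})$ lies over $x=0$, which is neither a branch point nor a zero of $y$, so $x$ is a local parameter there and the branch of the curve through $Q_0$ is $y=\sum_{j\ge0}B_jx^j$, the Taylor expansion of $\sqrt{(a-x)(b-x)(\beta-x)}$ about $x=0$. The requirement $\mathrm{ord}_{Q_0}f\ge n$ becomes $p(x)+q(x)\sum_j B_j x^j\equiv 0\pmod{x^{n}}$. The coefficients of $x^0,\dots,x^{\lfloor n/2\rfloor}$ can be matched by a free choice of $p$, while vanishing of the coefficients of $x^{\lfloor n/2\rfloor+1},\dots,x^{n-1}$ is a homogeneous linear system in the coefficients of $q$ whose coefficient matrix is a square Hankel matrix with entries among $B_2,\dots,B_{n-1}$. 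A nonzero $f$ exists precisely when that matrix is singular; reading off its entries yields \eqref{eq:cayley-even} when $n$ is even and the displayed determinant (after reversing the order of the columns, which only changes the sign) when $n$ is odd.

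The only real subtlety, and the step I would carry out carefully, is the bookkeeping of the index ranges: for odd $n$ the space $L(nQ_\infty)$ admits a $q$ of degree one higher relative to $p$ than in the even case, and this is exactly what shifts the top-left entry of the Hankel determinant from $B_3$ down to $B_2$ and produces the genuinely new odd-periodicity condition (consistent with the fact that, unlike in the short-axis case, odd-periodic trajectories now occur). For the remaining routine matters — the equivalence of the two stated forms of the even case with the standard one, and the explicit translation to the polynomial (Pell) formulation — I would refer to \cite{DR2019} and \cite{DR2018}.
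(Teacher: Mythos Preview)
Your proposal is correct and follows exactly the classical Cayley mechanism that the paper invokes by reference to \cite{DR2019}: starting from the divisor relation $nQ_0\sim nQ_\infty$ of Theorem \ref{th:divisor-magic3}, writing $f=p(x)+q(x)y\in L(nQ_\infty)$, expanding $y$ at $Q_0$ via the Taylor series $\sum B_jx^j$, and reducing existence of a nonzero $f$ with $\mathrm{ord}_{Q_0}f\ge n$ to singularity of the corresponding Hankel matrix. The paper itself gives no proof beyond that citation, so you have simply supplied the details the authors omit; your index bookkeeping for the even and odd cases is accurate.
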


\begin{corollary}\label{cor:magic-pol3}
	The trajectories of the magic billiard with flipping over the long axis with caustic $\C_{\beta}$ are $n$-periodic if and only if there exists a pair of real polynomials $p_{d_1}$, $q_{d_2}$ of degrees $d_1$, $d_2$ respectively, and satisfying the following:
	\begin{itemize}
		\item[(a)] if $n=2m$ is even,  then $d_1=m$, $d_2=m-2$, and \eqref{eq:pol1};
		\item[(b)] if $n=2m+1$ is odd, then $d_1=m$, $d_2=m-1$, and
		$$
		sp_m^2(s)
		-
		\left(s-\frac1a\right)\left(s-\frac1b\right)\left(s-\frac1{\beta}\right)q_{m-1}^2(s)=1.
		$$
	\end{itemize}	
\end{corollary}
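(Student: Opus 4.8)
The plan is to turn the divisor relation of Theorem~\ref{th:divisor-magic3} into a polynomial Pell-type equation, following the argument used for the elliptical billiard in \cite{DR2019} (and, in the general framework, \cite{DR2018}). By Theorem~\ref{th:divisor-magic3}, a trajectory with caustic $\C_\beta$ is $n$-periodic if and only if $nQ_0\sim nQ_\infty$ on the curve \eqref{eq:curve}, which I write as $y^2=f(x)$ with $f(x)=(a-x)(b-x)(\beta-x)$; for a genuine caustic $0<\beta<a$, so $Q_0=(0,\sqrt{ab\beta})$ is a non-Weierstrass real point and $Q_\infty$ the real point at infinity. For even $n=2m$ this is literally the divisor condition of the standard elliptical billiard and of both flipping cases, so part~(a) follows from exactly the same even-$n$ computation (cf.\ \cite{DR2019} and Corollary~\ref{cor:magic-pol2}(a)); the new content is the odd case~(b), although I would carry both through in parallel.

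First I would restate the relation. As $\deg(nQ_0-nQ_\infty)=0$, we have $nQ_0\sim nQ_\infty$ if and only if there is a nonzero rational function $h$ on the curve with $\operatorname{div}(h)=n(Q_0-Q_\infty)$, i.e.\ $h$ lies in the Riemann--Roch space $L(nQ_\infty)$ and has a zero of order $n$ at $Q_0$; a degree count then forces this to be its only zero. Every $h\in L(nQ_\infty)$ has the form $h=A(x)+B(x)y$ with $A,B$ polynomials, and since $\operatorname{ord}_{Q_\infty}x=-2$ and $\operatorname{ord}_{Q_\infty}y=-3$ are of opposite parity there is no cancellation, so $\operatorname{ord}_{Q_\infty}h=-\max\{2\deg A,\,2\deg B+3\}$. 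Requiring this to equal $-n$ pins down the degrees: $\deg A=m$ and $\deg B\le m-2$ if $n=2m$, while $\deg A\le m$ and $\deg B=m-1$ if $n=2m+1$ --- the numbers $d_1,d_2$ in the statement.

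Next I would pass to the norm. With $\overline h=A(x)-B(x)y$ the hyperelliptic conjugate, $N(h)=h\,\overline h=A(x)^2-B(x)^2f(x)$ is a polynomial in $x$ whose divisor on the curve is $n\bigl(Q_0+\overline{Q_0}-2Q_\infty\bigr)$; hence $N(h)$ has degree $n$ in $x$ and vanishes only at $x=0$, there to order $n$, so $A(x)^2-B(x)^2f(x)=c\,x^n$ with $c$ the square of a leading coefficient, in particular $c>0$. Putting $x=0$ gives $A(0)^2=B(0)^2ab\beta$; and since $h$ must not vanish at $\overline{Q_0}\ne Q_0$, necessarily $B(0)\ne0$, hence $A(0)\ne0$. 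Then I substitute $x=1/s$, use $f(1/s)=ab\beta\,s^{-3}(s-1/a)(s-1/b)(s-1/\beta)$, multiply by $s^n$, and reverse the polynomials: $s^{2m}A(1/s)^2=(s^mA(1/s))^2$ leaves no extra factor in the even case, whereas $s^{2m+1}A(1/s)^2=s\,(s^mA(1/s))^2$ leaves a single factor $s$ in the odd case. Because $A(0),B(0)\ne0$, the reversed polynomials $p_m$, $q$ retain the exact degrees $m$ and $m-2$ (resp.\ $m$ and $m-1$), and absorbing the positive constants $c$ and $ab\beta$ into a rescaling of $q$ yields precisely \eqref{eq:pol1} when $n=2m$ and $s\,p_m^2(s)-(s-1/a)(s-1/b)(s-1/\beta)\,q_{m-1}^2(s)=1$ when $n=2m+1$. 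The converse is the same chain of equalities read backwards, and $p_m,q$ may be taken real because $n(Q_0-Q_\infty)$ is a principal divisor defined over $\mathbf R$ (both $Q_0$ and $Q_\infty$ being real).

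The step I expect to be the main obstacle is the bookkeeping in this last reversal: one has to check carefully that the parity of $n$ is exactly what produces the extra factor $s$ in the odd equation and nothing in the even one; that $B(0)\ne0$ keeps the reversed polynomials at their nominal degrees, so that in the even case the two square terms both have degree $2m$ and their leading coefficients cancel, leaving the constant $1$ on the right; and that the right-hand side is $+1$ rather than $-1$. This sign is what distinguishes the present case from Corollary~\ref{cor:magic-pol2}(b): here the divisor condition is the unshifted $nQ_0\sim nQ_\infty$, so no sign is acquired, which one can double-check against the congruences $2Q_\infty\sim2Q_a\sim2Q_b\sim2Q_\beta$ and $3Q_\infty\sim Q_a+Q_b+Q_\beta$ already used in the proof of Theorem~\ref{th:divisor-magic3}.
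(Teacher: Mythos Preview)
Your proposal is correct and follows exactly the route the paper intends: the paper does not give an explicit proof but simply says the polynomial conditions are derived from the divisor relation of Theorem~\ref{th:divisor-magic3} ``similarly as in \cite{DR2019}'', and you have carefully carried out that derivation (norm of $A+By$, the identity $A^2-B^2f=cx^n$, the substitution $x=1/s$ and coefficient reversal) with the correct degree and sign bookkeeping. In particular, your observation that for odd $n$ the leading term of $N(h)$ comes from $-B_{\mathrm{lead}}^2\cdot(-1)>0$, together with $A(0),B(0)\ne0$, cleanly yields the $+1$ on the right-hand side and the exact degrees~$m$ and~$m-1$.
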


Finally, we give the topological description of this system.

\begin{theorem}[\cite{FVZ2021}]\label{th:fomenko-magic3}
	The Liouville equivalence class of the magic billiard with flipping through the center is given by the Fomenko graph in Figure \ref{fig:fom-magic3}.
\end{theorem}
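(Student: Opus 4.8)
The cleanest route is to observe that this system is the billiard with slipping by a half‑ellipse (Figure~\ref{fig:magic3}; cf.~Example~\ref{ex:slipping}), so that its Fomenko--Zieschang invariant is the one computed in \cite{FVZ2021}; below I indicate how the same answer is reached by the method used for Theorems~\ref{th:fomenko-magic1} and~\ref{th:fomenko-magic2}. By Remark~\ref{rem:configuration}, the configuration space is the closed elliptical disc with each boundary point identified with its antipode, hence homeomorphic to the real projective plane $\mathbf{RP}^2$, with $\E$ represented by a one‑sided circle. The first integral is, as before, the parameter $\lambda$ of the caustic from the confocal family \eqref{eq:confocal}, with critical values $\lambda=0$ (the boundary), $\lambda=b$ (the foci) and $\lambda=a$ (the short axis).

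For the regular values I would note that for $0<\lambda<b$, where the caustic is an ellipse, the half‑turn $\varphi$ is orientation‑preserving and so carries a trajectory winding around the caustic into one winding in the same sense; hence, exactly as for the standard elliptical billiard, the level set consists of two Liouville tori, each obtained by gluing copies of the annulus between the caustic and $\E$ along congruent boundary arcs, in the spirit of Figure~\ref{fig:glue1}. For $b<\lambda<a$, where the caustic is a hyperbola, the level set is a single Liouville torus. For the singular fibres: the level set $\lambda=0$ is the limiting motion along $\E$, which after the antipodal identification splits into two closed orbits, one per orientation along the boundary, contributing two atoms $\mathbf{A}$; the level set $\lambda=a$ is the back‑and‑forth motion along the short axis, a single closed orbit, hence an atom $\mathbf{A}$; on the level set $\lambda=b$ the trajectories pass through the foci, and the only closed one is the motion along the long axis, which here is $1$‑periodic (hitting an endpoint of that axis, the particle is magically sent to the opposite endpoint), so this level set has complexity $1$, while every other trajectory on it provides two separatrices. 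Since the numbers of connected components of nearby regular level sets are $2$ for $\lambda<b$ and $1$ for $\lambda>b$, the classification of complexity‑$1$ atoms from \cite{BF2004} forces the level set $\lambda=b$ to be the atom $\mathbf{B}$. Thus the rough Liouville equivalence class is the tree with the atom $\mathbf{B}$ at $\lambda=b$, joined by two edges to atoms $\mathbf{A}$ on the side $\lambda<b$ and by one edge to an atom $\mathbf{A}$ on the side $\lambda>b$.

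To obtain the numerical marks I would equip each torus with an admissible coordinate system as prescribed in \cite{BF2004}, taking the vanishing cycle together with the cycle carried by the gluing arcs of the annuli (respectively of the regions between the branches of the hyperbola), compute the gluing matrices, and read off the integer mark $n$ of the family containing $\mathbf{B}$. The bookkeeping is organised exactly as for the matching glued billiard table treated in \cite{Fokicheva2015}; moreover the magic billiard is time‑reversible and time reversal interchanges the two edges on the side $\lambda<b$, so these carry equal $r$ and $\varepsilon$, which shortens the computation. Carrying this out produces the labelled graph of Figure~\ref{fig:fom-magic3}, in agreement with \cite{FVZ2021}.

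The shape of the graph and the types of the atoms follow routinely from the analysis already performed for Theorems~\ref{th:fomenko-magic1} and~\ref{th:fomenko-magic2} and from Remark~\ref{rem:every2}. The only genuinely delicate point is the last step: one must track the admissible frames carefully through the antipodal identification so as to pin down the signs $\varepsilon$ and, above all, the value of the integer mark $n$ at the atom $\mathbf{B}$, and to make sure the orientation conventions single out $\mathbf{B}$ rather than its mirror image.
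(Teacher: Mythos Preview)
Your analysis of the level sets is incorrect, and the graph you actually describe --- an atom $\mathbf{B}$ at $\lambda=b$ with two edges toward $\lambda<b$ and one toward $\lambda>b$ --- is \emph{not} the graph of Figure~\ref{fig:fom-magic3}, which has the atom $\mathbf{C_2}$ at $\lambda=b$ joined to \emph{two} atoms $\mathbf{A}$ on each side. The root of the error is a miscount of how $\varphi^*$ acts for the half-turn: since $d\varphi=-\mathrm{Id}$, the composition ``billiard reflection followed by $d\varphi$'' does \emph{not} produce back-and-forth motion along the axes. A particle moving upward along the short axis hits the top, is billiard-reflected downward, and $d\varphi$ then sends this to an \emph{upward} velocity at the bottom vertex; the short axis therefore carries two one-directional closed orbits (upward and downward), so there are two atoms $\mathbf{A}$ at $\lambda=a$, not one. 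By the same computation the long axis carries two closed orbits (rightward and leftward), so the level $\lambda=b$ has complexity $2$, not $1$.

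The same mechanism forces every segment of a trajectory with a hyperbola as caustic to point in the \emph{same} vertical direction (compare the right panel of Figure~\ref{fig:magic3}), so for each $b<\lambda<a$ there are two Liouville tori, not one. On the level $\lambda=b$ one then has four separatrices (two foci times two vertical directions), all heteroclinic between the two axis orbits, and the nearby regular levels have two components on \emph{both} sides of $b$. The classification of complexity-$2$ atoms in \cite{BF2004} now singles out $\mathbf{C_2}$; the numerical marks, including $n=-4$, are those computed in \cite{FVZ2021}.
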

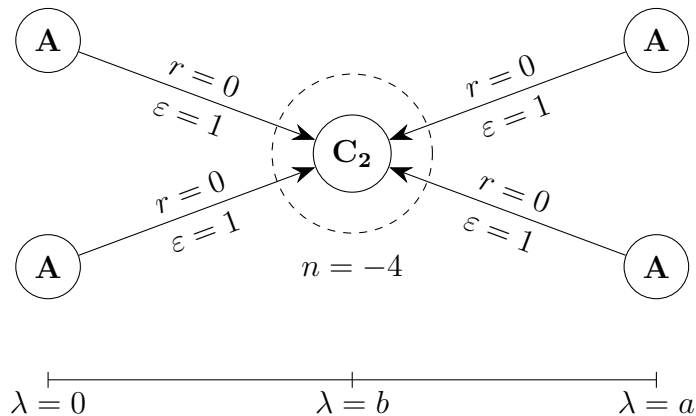
\begin{figure}[h]
	\centering
	\begin{tikzpicture}[>=stealth',el/.style = {inner sep=3pt, align=left, sloped},em/.style = {inner sep=3pt, pos=0.75, sloped}]
		\tikzset{vertex/.style = {shape=circle,draw,minimum size=1.5em}}
		% vertices
		\node[vertex] (aa) at  (0,1.5) {$\mathbf{A}$};
		\node[vertex] (bb) at  (4,0) {$\mathbf{C_2}$};
		\node[vertex] (ee) at  (8,1.5) {$\mathbf{A}$};
		\node[vertex] (ff) at  (0,-1.5) {$\mathbf{A}$};
		\node[vertex] (jj) at  (8,-1.5) {$\mathbf{A}$};
		%edges
		\path[{Stealth[length=3mm]}-] 
		(bb) edge node[above,sloped]{$r=0$} 
		node[below,sloped]{$\varepsilon=1$} (aa) %top-left edge
		(bb) edge node[above,sloped]{$r=0$} 
		node[below,sloped]{$\varepsilon=1$}(ee) % top-right edge
		(bb) edge node[above,sloped]{$r=0$} 
		node[below,sloped]{$\varepsilon=1$}(ff) % bottom-left edge
		(bb) edge node[above,sloped]{$r=0$} 
		node[below,sloped]{$\varepsilon=1$}(jj); % bottom-right edge
		
			\draw[black, dashed]
		(bb) circle (30pt);
		\node at (4,-1.5) {$n=-4$};

		%axes
		\draw (0,-3) -- (8,-3);
		\foreach \x in {0,4,8}
		\draw[shift={(\x,-3)},color=black] (0pt,3pt) -- (0pt,-3pt);
		\node[below] at (0,-3) { $\lambda=0$};
		%\node[below] at (2,-3) { $\lambda=\beta_2$};
		\node[below] at (4,-3) { $\lambda=b$};
		\node[below] at (8,-3) { $\lambda=a$};
	\end{tikzpicture}
	\caption{Theorem \ref{th:fomenko-magic3}: Fomenko graph for the magic billiard with slipping.}
	\label{fig:fom-magic3}
\end{figure}
\begin{proof}
According to Remark \ref{rem:configuration}, we define the configuration space as the billiard table where the points of $\E$ which are diametrically symmetric to each other are identified.
Such configuration space is homeomorphic to the projective plane.
	
In the phase space, any level set corresponding to a any given ellipse as caustic consists of two Liouville tori: one torus contains the trajectories that are winding in the clockwise direction about the caustic, the other torus contains the counterclockwise trajectories.

The level set corresponding to hyperbola as caustic also consists of two Liouville tori: one torus where all segments of the trajectories point downwards, and the other where they point upwards.
	
The level set corresponding to the caustic $\E=\C_0$  corresponds to a pair of Fomenko atoms $\mathbf{A}$: each of them contains a single closed trajectory along the boundary ellipse, winding in one of the two directions.
	
The level set with the caustic $\C_a$ corresponds to the motion the short axis of the ellipse $\E$.
It consists of two closed trajectories -- the upwards and the downwards one.
Thus we have two Fomenko atoms of type $\mathbf{A}$ there.
	
The level set with the caustic $\C_b$ consists of the trajectories that contain the foci of the ellipse $\E$.
Two of those trajectories are closed: one corresponds to motion from left to the right along the long axis, and the other one to the motion from right to the left, so $\C_b$ is of complexity $2$.
	
For any other trajectory on that level set, each segment will pass through the same focus, and each segment points in the same direction: upwards or downwards.
Moreover, such a trajectory is heteroclinic, i.e.~it approaches one of the closed orbits as $t\to+\infty$ and the other one as $t\to-\infty$.
Namely, the trajectories containing the left focus will approach the closed orbit where the motion is from right to the left along the axis in the forward direction and the other closed orbit in the backward direction of time, and vice versa for the trajectories through the right focus.

Thus, there are four separatrices on that level set, all consisting of heteroclinic trajectories.	

Now, following the classification of Fomenko atoms from \cite{BF2004}, we find that the following ones are of complexity $2$ with four separatrices: $\mathbf{C_1}$, $\mathbf{C_2}$, $\mathbf{D_1}$, $\mathbf{D_2}$.
The last two atoms have two homoclinic and two heteroclinic separatrices, thus they do not correspond to this case.
The atom $\mathbf{C_1}$ also can be eliminated, since level sets close to that one consist of only one Liouville torus. 
Since non-degenerate level sets close to $\C_b$ have two connected components for both cases $\lambda<b$ and $\lambda>b$, we conclude that
 it is represented by the Fomenko atom $\mathbf{C}_2$.
\end{proof}

\begin{remark}\label{rem:halfturn}
For a more detailed proof of Theorem \ref{th:fomenko-magic3}, together with the calculation of the numerical invariants, see \cite{FVZ2021}.
\end{remark}

\section{Magic billiards in elliptic annulus}\label{sec:magic-ann}

In this section, we discuss magic billiards in the annulus between two confocal ellipses $\E_1$ and $\E_2$, see Figure \ref{fig:annulus}.
	\begin{figure}[h]
	\centering
	\begin{tikzpicture}[line cap=round,line join=round,>=stealth,x=1.0cm,y=1.0cm]
		%first two ellipses for L_1
		\draw [line width=1.5pt,color=black,pattern=north east lines,pattern color=gray!40] (0.,0.) ellipse (3.cm and 2.cm);
		\draw [line width=1.5pt, 
		%dash pattern=on 3pt off 3pt, 
		color=gray, fill=white, fill opacity=1.0] (0.,0.) ellipse (2.45cm and 1.cm);
		%labels
		\draw[color=black] (-1.8,2.0) node {$\E_{1}$};
		\draw[color=black] (-1.8,0.25) node {$\E_{2}$};
	\end{tikzpicture}
	\caption{The boundary of the billiard table consists of two confocal ellipses.}	\label{fig:annulus}
\end{figure}
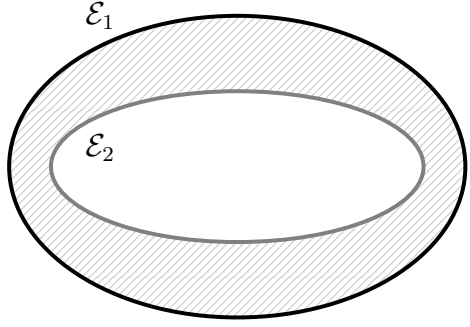
In order to preserve integrability and caustics, we will assume that the mapping $\varphi$ is defined as follows:
\begin{itemize}
	\item the restriction of $\varphi$ to $\E_1$ is the reflection with respect to one of the axes or the central symmetry;
	\item the restriction to $\E_2$ is the identity.
\end{itemize}
The mapping $\varphi^*$ will be defined on the velocity vectors at the points of $\E_1$ in the same way as explained in Section \ref{sec:magic-ell}.
On $\E_2$, mapping $\varphi^*$ is the ordinary billiard reflection.

\begin{remark}
If magic reflection is introduced on the inner boundary, then the dynamics does not any more depend continuously on the initial conditions.
Namely, consider motion parallel to a tangent line to $\E_2$, close to the point of tangency.
If the particle does not cross $\E_2$, the motion is continued straight along that line.
If the particle reaches $\E_2$, it will be, according to the mapping $\varphi$ magically transported to another point of $\E_2$ and continue motion from there.
Thus, the continuity close to tangency to $\E_2$ is lost.
\end{remark}

\begin{example}
Two trajectories when $\varphi$ on the outer boundary is flipping over the long axis are shown in Figure \ref{fig:magic4}.
\end{example}
\begin{figure}[h]
	\centering
	\begin{tikzpicture}[>=Stealth]
		\coordinate (A1) at (-2.99587, 0.10485);
		\coordinate (A2) at (-2.33704, -0.299511);
		\coordinate (A3) at (-2.31015, -1.27597);
		\coordinate (A3') at (-2.31015, 1.27597);
		\coordinate (A4) at (-0.663786, 0.962582);
		\coordinate (A5) at (1.26049, 1.8149);
		\coordinate (A5') at (1.26049, -1.8149);
		\coordinate (A6) at (2.04423, -0.550926);
		\coordinate (A7) at (2.91579, -0.470553);
			\coordinate (A4a) at ($ (A3')!8.0/10!(A4) $);

		\draw [very thick,color=gray] (0.,0.) ellipse (3 and 2);
		\draw [very thick,color=gray] (0.,0.) ellipse (2.44949 and 1);
		\draw [thick,dotted, color=gray] (0.,0.) ellipse (2.34521 and 0.707107);

		\draw [->,line width=1.5pt] (A1)-- (A2);
		\draw [->,line width=1.5pt] (A2)--(A3);
		\draw [line width=1.5pt] (A3')--(A4);
		\draw [->,line width=1.5pt] (A3')--(A4a);
		\draw [->,line width=1.5pt] (A4)--(A5);
		\draw [->,line width=1.5pt] (A5')--(A6);
		\draw [->,line width=1.5pt] (A6)--(A7);

		\draw[black, fill=black]
		(A1) circle (2pt) node[left] {$A_1$};
		\draw[black, fill=black]
		(A2) circle (2pt) node[above right] {$A_2$};
		\draw[black, fill=black]
		(A3) circle (2pt) node[below] {$A_3$};
		\draw[black, fill=black]
		(A3') circle (2pt) node[above left] {$A_3'$};
		\draw[black, fill=black]
		(A4) circle (2pt) node[above] {$A_4$};
		\draw[black, fill=black]
		(A5) circle (2pt) node[below] {$A_5$};
		\draw[black, fill=black]
		(A5') circle (2pt) node[below] {$A_5'$};
		\draw[black, fill=black]
		(A6) circle (2pt) node[above left] {$A_6$};
		\draw[black, fill=black]
		(A7) circle (2pt) node[right] {$A_7$};

		\begin{scope}[shift={(8,0)}]
			\coordinate (B1) at (1.97935, 1.50291);
			\coordinate (B2) at (1.2071, 0.870144);
			\coordinate (B3) at (0.715496, 1.94229);
			\coordinate (B3') at (0.715496, -1.94229);
			\coordinate (B4) at (-0.172605, -0.997514);
			\coordinate (B5) at (-1.08688, -1.86413);
			\coordinate (B5') at (-1.08688, 1.86413);
			\coordinate (B6) at (-1.417, 0.815692);
			\coordinate (B7) at (-2.12313, 1.41301);

			\draw [very thick,color=gray] (0.,0.) ellipse (3 and 2);
			\draw [very thick,color=gray] (0.,0.) ellipse (2.44949 and 1);

			\draw[thick, gray, dotted, domain=-2:2,smooth,variable=\y]
			plot ({sqrt(3+3/2*\y*\y)},{\y});
			\draw[thick, gray, dotted, domain=-2:2,smooth,variable=\y]
			plot ({-sqrt(3+3/2*\y*\y)},{\y});

			\draw [->,line width=1.5pt] (B1)-- (B2);
			\draw [->,line width=1.5pt] (B2)--(B3);
			\draw [->,line width=1.5pt] (B3')--(B4);
			\draw [->,line width=1.5pt] (B4)--(B5);
			\draw [->,line width=1.5pt] (B5')--(B6);
			\draw [->,line width=1.5pt] (B6)--(B7);

			\draw[black, fill=black]
			(B1) circle (2pt) node[above] {$B_1$};
			\draw[black, fill=black]	(B2) circle (2pt) node[below left] {$B_2$};
			\draw[black, fill=black](B3) circle (2pt) node[above] {$B_3$};
			\draw[black, fill=black](B3') circle (2pt) node[below] {$B_3'$};
			\draw[black, fill=black](B4) circle (2pt) node[above] {$B_4$};
			\draw[black, fill=black](B5) circle (2pt) node[below] {$B_5$};
			\draw[black, fill=black](B5') circle (2pt) node[above] {$B_5'$};
			\draw[black, fill=black](B6) circle (2pt) node[below] {$B_6$};
			\draw[black, fill=black](B7) circle (2pt) node[above] {$B_7$};
			
		\end{scope}

	\end{tikzpicture}
	\caption{Billiard between two ellipses with magic flipping over the long axis when the particle hits the outer boundary. The dotted curves are caustics.
	}	\label{fig:magic4}
\end{figure}
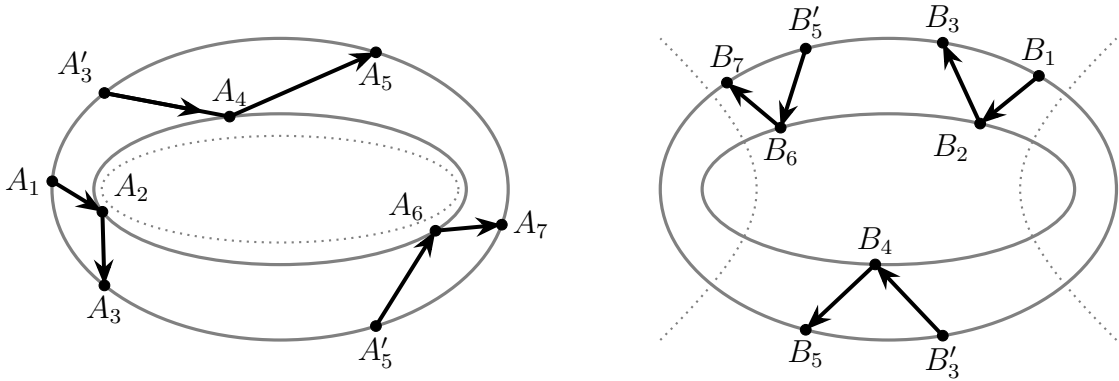

\begin{theorem}
The magic billiard in the annulus between two confocal ellipses with flipping over the long axis on the outer boundary is roughly Liouville equivalent to the magic billiard within an ellipse with flipping over the short axis.
\end{theorem}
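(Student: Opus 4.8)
The plan is to compute the rough Liouville invariant (the Fomenko graph together with its atoms, ignoring the numerical marks) of this annulus system directly, following the scheme of the proofs of Theorems~\ref{th:fomenko-magic1} and~\ref{th:fomenko-magic2}, and then to recognise it as the graph of Figure~\ref{fig:fom-magic2}. First I would fix elliptic coordinates $(\lambda_1,\lambda_2)$ for the confocal family~\eqref{eq:confocal}, writing the outer ellipse as $\E_1=\C_0$ and the inner one as $\E_2=\C_{\mu}$ with $0<\mu<b$, so that the annulus is $\{0\le\lambda_1\le\mu,\ b\le\lambda_2\le a\}$. Since $\varphi$ (the long-axis reflection on $\E_1$, the identity on $\E_2$) and the corresponding $\varphi^{*}$ leave the elliptic coordinates and hence equation~\eqref{eq:elliptic-dif} unchanged, every trajectory has a caustic $\C_{\beta}$, and the caustic parameter $\beta\in[0,a]$ is a first integral of the flow. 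The configuration space is the annulus with the points of $\E_1$ identified under the long-axis reflection; as in the earlier proofs the phase space is a closed three-manifold, since the ordinary reflection at $\E_2$ produces a quotient, not a boundary.

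Next I would describe the fibres of the Liouville foliation over $\beta$, i.e.\ the edges of the graph. The expected picture: for an ellipse caustic, $\beta\in(0,b)$, the fibre is a single torus --- for $\beta<\mu$ the trajectory never reaches $\E_2$, so this is literally the ellipse-with-long-axis-flip situation of Theorem~\ref{th:fomenko-magic1}, while for $\beta>\mu$ the trajectory additionally reflects off $\E_2$ but still winds monotonically around the caustic; in particular $\beta=\mu$ (caustic $=\E_2$) is a regular value, grazing a confocal wall changing only the combinatorial type of a trajectory and not the topology of the fibre. For a hyperbola caustic, $\beta\in(b,a)$, the fibre is again a single torus: here the new phenomenon is that the ordinary reflection at $\E_2$ does not preserve the ``all segments pointing upwards / all segments pointing downwards'' dichotomy, so the two tori that remain distinct in the pure-ellipse long-axis-flip case of Theorem~\ref{th:fomenko-magic1} are now glued into one.

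Then I would read off the vertices. At $\beta=0$ the two oriented limit motions along $\E_1$ are interchanged by the orientation-reversing flip, giving a single closed orbit, atom $\mathbf{A}$. At $\beta=a$ the $y$-axis segment is cut by the hole into an upper and a lower part, but since the flip sends $(0,\sqrt b)$ to $(0,-\sqrt b)$ the motion along the $y$-axis is one closed orbit visiting both parts, onto which the torus of the hyperbola-caustic edge shrinks: atom $\mathbf{A}$. At $\beta=b$ the situation is the crucial one: because $\mu<b$ the foci lie inside $\E_2$, so the long axis is cut into the two exterior segments $\sqrt{a-\mu}\le|x|\le\sqrt a$; these carry exactly two closed two-periodic orbits, while every other trajectory on this fibre has segments whose extensions pass through a focus (alternately through the two foci, by the confocal reflection law combined with the flip). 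This is a complexity-two singular fibre with two separatrix surfaces and single-torus regular neighbours on both sides, hence, by the classification of atoms in~\cite{BF2004}, the atom $\mathbf{A^{**}}$.

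Assembling these data yields the molecule $\mathbf{A}-\mathbf{A^{**}}-\mathbf{A}$, which by Theorem~\ref{th:fomenko-magic2} is precisely the rough Liouville invariant of the magic billiard within an ellipse with flipping over the short axis, so the two systems are roughly Liouville equivalent. I expect the real work to lie in the hyperbola-caustic step and the $\beta=b$ analysis: one must control exactly how the ordinary reflection at the inner ellipse interacts with the magic flip at the outer ellipse, in order to be certain that the hyperbola-caustic fibre does not split and that the foci fibre carries exactly two, not four, separatrices. A systematic way to handle this --- and to obtain the numerical marks, should one want the full Fomenko--Zieschang invariant rather than just the rough one --- is to pass, in the spirit of Remark~\ref{rem:flip}, to the topological billiard obtained by reflecting one half of the annulus in the long axis and gluing the halves along the isometric elliptic boundary arcs, and then to invoke the corresponding computation from~\cite{Fokicheva2015}.
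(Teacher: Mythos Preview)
Your proposal is correct and follows essentially the same route as the paper's own proof: analyse the level sets of the caustic parameter one by one, identify the atoms $\mathbf{A}$ at $\beta=0$ and $\beta=a$ and the complexity-two atom $\mathbf{A^{**}}$ at $\beta=b$, and observe that the resulting molecule $\mathbf{A}-\mathbf{A^{**}}-\mathbf{A}$ coincides with that of Theorem~\ref{th:fomenko-magic2}. Your treatment is in fact somewhat more explicit than the paper's, which simply asserts that every non-degenerate level set is a single torus; your remark that the ordinary reflection at $\E_2$ destroys the upward/downward dichotomy and thereby merges the two hyperbola-caustic tori of Theorem~\ref{th:fomenko-magic1} is exactly the missing explanation, and your closing reference to the topological-billiard model in~\cite{Fokicheva2015} anticipates the paper's later remark on the domains $\Delta_{\beta}(2B_2'')_{xx}$ and $\Delta_{\beta}(2B_1)_{yy}$. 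One phrasing to tighten: for $\mu<\beta<b$ the caustic lies inside $\E_2$, so the trajectory cannot literally ``wind monotonically around the caustic''; what matters is only that the level set is connected and two-dimensional, and your conclusion there is correct.
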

\begin{proof}
In the phase space, any level set corresponding to the trajectories having a fixed non-degenerate conic as caustic consists of a single Liouville torus.
	
The level set corresponding to the caustic $\E=\C_0$  corresponds to the Fomenko atom $\mathbf{A}$, similarly as in the proof of Theorem \ref{th:fomenko-magic1}.
	
The level set with the caustic $\C_a$ corresponds to the motion along two segments of the short axis which are within the annulus.
Each time when the particle hits the outer boundary, it is magically flipped to the other segment, thus this level set consists of a single closed trajectory, so we have the Fomenko atom of type $\mathbf{A}$ there.
	
The level set with the caustic $\C_b$ consists of the trajectories that contain the foci of the ellipse $\E$.
Two of those trajectories is closed: each trajectory corresponds to the motion along one of the segments of the long axis which are within the annulus.
For any other trajectory on that level set, the extensions of the segments will pass alternately through the two foci.
In one class of those trajectories, the segments that point from the outer boundary to the inner one contain the left focus, and their limit as time goes to $+\infty$ is the left segment on the long axis, while the time limit to $-\infty$ is the right segment.
In the other class, everything is opposite.
Thus, there are two separatrices on that level set, so it is represented by the Fomenko atom $\mathbf{A}^{**}$.
\end{proof}

\begin{example}
	Two trajectories when $\varphi$ on the outer boundary is flipping over the short axis are shown in Figure \ref{fig:magic5}.
\end{example}
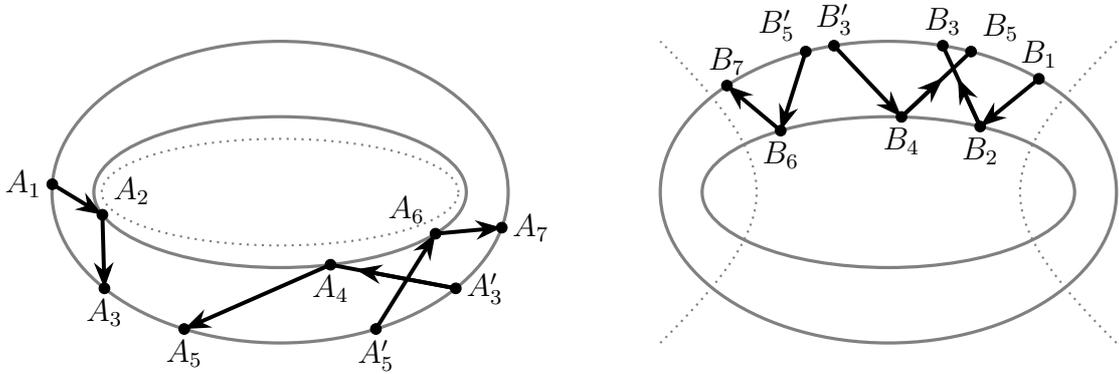
\begin{figure}[h]
	\centering
	\begin{tikzpicture}[>=Stealth]
		\coordinate (A1) at (-2.99587, 0.10485);
		\coordinate (A2) at (-2.33704, -0.299511);
		\coordinate (A3) at (-2.31015, -1.27597);
		\coordinate (A3') at (2.31015, -1.27597);
		\coordinate (A4) at (0.663786, -0.962582);
		\coordinate (A5) at (-1.26049, -1.8149);
		\coordinate (A5') at (1.26049, -1.8149);
		\coordinate (A6) at (2.04423, -0.550926);
		\coordinate (A7) at (2.91579, -0.470553);
			\coordinate (A4a) at ($ (A3')!8.0/10!(A4) $);

		\draw [very thick,color=gray] (0.,0.) ellipse (3 and 2);
		\draw [very thick,color=gray] (0.,0.) ellipse (2.44949 and 1);
		\draw [thick,dotted, color=gray] (0.,0.) ellipse (2.34521 and 0.707107);

		\draw [->,line width=1.5pt] (A1)-- (A2);
		\draw [->,line width=1.5pt] (A2)--(A3);
		\draw [line width=1.5pt] (A3')--(A4);
		\draw [->,line width=1.5pt] (A3')--(A4a);
		\draw [->,line width=1.5pt] (A4)--(A5);
		\draw [->,line width=1.5pt] (A5')--(A6);
		\draw [->,line width=1.5pt] (A6)--(A7);

		\draw[black, fill=black]
		(A1) circle (2pt) node[left] {$A_1$};
		\draw[black, fill=black]
		(A2) circle (2pt) node[above right] {$A_2$};
		\draw[black, fill=black]
		(A3) circle (2pt) node[below] {$A_3$};
		\draw[black, fill=black]
		(A3') circle (2pt) node[right] {$A_3'$};
		\draw[black, fill=black]
		(A4) circle (2pt) node[below] {$A_4$};
		\draw[black, fill=black]
		(A5) circle (2pt) node[below] {$A_5$};
		\draw[black, fill=black]
		(A5') circle (2pt) node[below] {$A_5'$};
		\draw[black, fill=black]
		(A6) circle (2pt) node[above left] {$A_6$};
		\draw[black, fill=black]
		(A7) circle (2pt) node[right] {$A_7$};

		\begin{scope}[shift={(8,0)}]
			\coordinate (B1) at (1.97935, 1.50291);
			\coordinate (B2) at (1.2071, 0.870144);
			\coordinate (B3) at (0.715496, 1.94229);
			\coordinate (B3') at (-0.715496, 1.94229);
			\coordinate (B4) at (0.172605, 0.997514);
			\coordinate (B5) at (1.08688, 1.86413);
			\coordinate (B5') at (-1.08688, 1.86413);
			\coordinate (B6) at (-1.417, 0.815692);
			\coordinate (B7) at (-2.12313, 1.41301);
			
			\coordinate (B5a) at ($ (B4)!6.0/10!(B5) $);
			\coordinate (B3a) at ($ (B2)!6.0/10!(B3) $);

			\draw [very thick,color=gray] (0.,0.) ellipse (3 and 2);
			\draw [very thick,color=gray] (0.,0.) ellipse (2.44949 and 1);

			\draw[thick, gray, dotted, domain=-2:2,smooth,variable=\y]
			plot ({sqrt(3+3/2*\y*\y)},{\y});
			\draw[thick, gray, dotted, domain=-2:2,smooth,variable=\y]
			plot ({-sqrt(3+3/2*\y*\y)},{\y});

			\draw [->,line width=1.5pt] (B1)-- (B2);
			\draw [->,line width=1.5pt] (B2)--(B3a);
			\draw [line width=1.5pt] (B2)--(B3);
			\draw [->,line width=1.5pt] (B3')--(B4);
			\draw [->,line width=1.5pt] (B4)--(B5a);
			\draw [line width=1.5pt] (B4)--(B5);
			\draw [->,line width=1.5pt] (B5')--(B6);
			\draw [->,line width=1.5pt] (B6)--(B7);

			\draw[black, fill=black]
			(B1) circle (2pt) node[above] {$B_1$};
			\draw[black, fill=black]	(B2) circle (2pt) node[below] {$B_2$};
			\draw[black, fill=black](B3) circle (2pt) node[above] {$B_3$};
			\draw[black, fill=black](B3') circle (2pt) node[above] {$B_3'$};
			\draw[black, fill=black](B4) circle (2pt) node[below] {$B_4$};
			\draw[black, fill=black](B5) circle (2pt) node[above right] {$B_5$};
			\draw[black, fill=black](B5') circle (2pt) node[above left] {$B_5'$};
			\draw[black, fill=black](B6) circle (2pt) node[below] {$B_6$};
			\draw[black, fill=black](B7) circle (2pt) node[above] {$B_7$};
			
		\end{scope}

	\end{tikzpicture}
	\caption{Billiard between two ellipses with magic flipping over the short axis when the particle hits the outer boundary. The dotted curves are caustics.
	}	\label{fig:magic5}
\end{figure}

\begin{theorem}
	The magic billiard in the annulus between two confocal ellipses with flipping over the short axis on the outer boundary is roughly Liouville equivalent to the magic billiard within an ellipse with flipping over the long axis.
\end{theorem}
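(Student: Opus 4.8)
The plan is to compute the Fomenko molecule of this billiard directly from its Liouville foliation, fibre by fibre, in the spirit of the proof of the previous theorem and of Theorem~\ref{th:fomenko-magic1}, and then to recognise that the resulting graph is the one in Figure~\ref{fig:fom-magic1}. Write the inner ellipse as $\E_2=\C_{\gamma}$ with $0<\gamma<b$. The bifurcation parameter is the caustic parameter $\lambda\in[0,a]$, whose a priori special values are $\lambda=0$ (outer ellipse), $\lambda=\gamma$ (inner ellipse), $\lambda=b$ (focal conic) and $\lambda=a$ (short axis). Since the equations of motion in elliptic coordinates are unchanged and the flip over the short axis preserves the elliptic coordinates, every non-degenerate caustic yields a regular level set: for $0<\lambda<b$ --- an elliptic caustic, including the value $\lambda=\gamma$, which is therefore regular --- the level set is a single Liouville torus, while for $b<\lambda<a$ --- a hyperbolic caustic --- it consists of two Liouville tori, the two families being distinguished by whether the segments point ``upwards'' or ``downwards''. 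The essential point is that the reflection-and-flip over the short axis on the outer boundary preserves this up/down type, in contrast with the flip over the long axis of the previous theorem, which interchanges the two families.

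It then remains to identify the three degenerate fibres. For $\lambda=0$ the fibre is the limit motion back and forth along the outer ellipse $\E_1$, a single closed trajectory, hence the atom $\mathbf{A}$, exactly as in Theorem~\ref{th:fomenko-magic1}. For $\lambda=a$ the fibre consists of the motion along the two segments of the short axis contained in the annulus; the flip over the short axis fixes each outer endpoint and the vertical velocity there, so the particle simply oscillates within each segment, giving two closed trajectories and hence two atoms $\mathbf{A}$, and since $\lambda=a$ is an endpoint of the range these carry no separatrices. For $\lambda=b$ the fibre consists of the trajectories through the foci; the long axis meets the annulus in two segments, and the flip over the short axis interchanges their outer endpoints, so the motion along the long axis closes up into a \emph{single} closed orbit and the fibre has complexity $1$. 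Every other trajectory on this fibre has all its segments passing alternately through the two foci, which produces exactly two separatrices; by the classification of complexity-$1$ atoms in \cite{BF2004} the fibre is therefore the atom $\mathbf{B}$, consistently with the number of components of the nearby regular fibres being $1$ for $\lambda<b$ and $2$ for $\lambda>b$, which matches the three edges meeting at $\mathbf{B}$.

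Assembling these pieces, the molecule has a single atom $\mathbf{A}$ at $\lambda=0$, the atom $\mathbf{B}$ at $\lambda=b$, and two atoms $\mathbf{A}$ at $\lambda=a$, joined precisely as in Figure~\ref{fig:fom-magic1}; since rough Liouville equivalence concerns only the graph together with its atoms, this shows the system is roughly Liouville equivalent to the magic billiard within an ellipse with flipping over the long axis of Theorem~\ref{th:fomenko-magic1}. I expect the main obstacle to be the verification that a hyperbolic caustic produces \emph{two} Liouville tori rather than one --- that is, that the up/down character of the segments is genuinely invariant under the reflection-and-flip over the short axis on the outer boundary --- since this is precisely the feature distinguishing the present system from the annular billiard with flipping over the long axis. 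A secondary point to check with care is that the caustic passing through the inner boundary $\E_2$ is a regular value, i.e. that no closed orbit running ``along $\E_2$'' survives in this phase space, so that $\lambda=\gamma$ contributes no atom.
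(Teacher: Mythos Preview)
Your overall plan and the identification of the atoms at $\lambda=0$, $\lambda=a$, and $\lambda=b$ match the paper's proof, and the resulting molecule is indeed the graph of Figure~\ref{fig:fom-magic1}. However, the step you yourself flag as the main obstacle is carried out with the wrong invariant: the up/down direction of segments is \emph{not} preserved by this dynamics. A vertically upward segment hitting the top of $\E_1$ is reflected downward, and the flip over the short axis, which fixes $y$ and $v_y$, leaves it downward; the inner boundary uses the ordinary reflection, which likewise does not preserve the vertical sign of the velocity. In Figure~\ref{fig:magic5} one sees the hyperbolic-caustic trajectory alternating between upward and downward segments while remaining entirely in the upper half of the annulus.

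The invariant the paper uses is positional rather than directional. For $b<\lambda<a$ the vertices of the confocal hyperbola $\C_\lambda$ lie at $(\pm\sqrt{a-\lambda},0)$, and since $\lambda>b>\gamma$ these are inside $\E_2$; hence the region between the branches meets the long axis only inside $\E_2$, so its intersection with the annulus has two connected components, one in the upper half-plane and one in the lower. Neither the ordinary reflection on $\E_2$ nor the reflection-plus-short-axis-flip on $\E_1$ changes the sign of $y$, and free segments cannot cross the long axis while staying in this region, so each component is invariant and carries one Liouville torus. The same observation yields the two separatrices at $\lambda=b$: the non-closed focal trajectories lie entirely on one side of the long axis, which is exactly what the paper says; your phrase ``alternately through the two foci'' does not by itself separate two families. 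Once you replace the up/down criterion by the upper/lower-half criterion, the remainder of your argument goes through.
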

\begin{proof}
	In the phase space, any level set corresponding to the trajectories having an ellipse as caustic consists of a single Liouville torus.
	On the other hand, if the caustic is hyperbola, there are two tori: each corresponding to one connected component within the annulus between the two branches of the hyperbola.
	
	The level sets corresponding to the caustic $\E=\C_0$  corresponds to the Fomenko atom $\mathbf{A}$.

	The level set with the caustic $\C_a$ corresponds to the motion along two segments of the short axis which are within the annulus.
	Each segment is covered by one closed trajectory on that level set, thus both of them corresponds to the Fomenko atom $\mathbf{A}$.
	
	The level set with the caustic $\C_b$ consists of the trajectories that contain the foci of the ellipse $\E$.
	One of those trajectories is closed and it is placed on the long axis, traversing the two segments.
	Any other trajectory on that level set is placed on one side of the long axis, thus there are two separatrices.
We conclude that this level set corresponds to the the Fomenko atom $\mathbf{B}$.
\end{proof}

\begin{example}
	Two trajectories when $\varphi$ on the outer boundary is slipping by half-ellipse are shown in Figure \ref{fig:magic6}.
\end{example}
\begin{figure}[h]
	\centering
	\begin{tikzpicture}[>=Stealth]
		\coordinate (A1) at (-2.99587, 0.10485);
		\coordinate (A2) at (-2.33704, -0.299511);
		\coordinate (A3) at (-2.31015, -1.27597);
		\coordinate (A3') at (2.31015, 1.27597);
		\coordinate (A4) at (0.663786, 0.962582);
		\coordinate (A5) at (-1.26049, 1.8149);
		\coordinate (A5') at (1.26049, -1.8149);
		\coordinate (A6) at (2.04423, -0.550926);
		\coordinate (A7) at (2.91579, -0.470553);
		\coordinate (A4a) at ($ (A3')!8.0/10!(A4) $);

		\draw [very thick,color=gray] (0.,0.) ellipse (3 and 2);
		\draw [very thick,color=gray] (0.,0.) ellipse (2.44949 and 1);
		\draw [thick,dotted, color=gray] (0.,0.) ellipse (2.34521 and 0.707107);

		\draw [->,line width=1.5pt] (A1)-- (A2);
		\draw [->,line width=1.5pt] (A2)--(A3);
		\draw [line width=1.5pt] (A3')--(A4);
		\draw [->,line width=1.5pt] (A3')--(A4a);
		\draw [->,line width=1.5pt] (A4)--(A5);
		\draw [->,line width=1.5pt] (A5')--(A6);
		\draw [->,line width=1.5pt] (A6)--(A7);

		\draw[black, fill=black]
		(A1) circle (2pt) node[left] {$A_1$};
		\draw[black, fill=black]
		(A2) circle (2pt) node[above right] {$A_2$};
		\draw[black, fill=black]
		(A3) circle (2pt) node[below] {$A_3$};
		\draw[black, fill=black]
		(A3') circle (2pt) node[right] {$A_3'$};
		\draw[black, fill=black]
		(A4) circle (2pt) node[above] {$A_4$};
		\draw[black, fill=black]
		(A5) circle (2pt) node[above] {$A_5$};
		\draw[black, fill=black]
		(A5') circle (2pt) node[below] {$A_5'$};
		\draw[black, fill=black]
		(A6) circle (2pt) node[above left] {$A_6$};
		\draw[black, fill=black]
		(A7) circle (2pt) node[right] {$A_7$};

		\begin{scope}[shift={(8,0)}]
			\coordinate (B1) at (1.97935, 1.50291);
			\coordinate (B2) at (1.2071, 0.870144);
			\coordinate (B3) at (0.715496, 1.94229);
			\coordinate (B3') at (-0.715496, -1.94229);
			\coordinate (B4) at (0.172605, -0.997514);
			\coordinate (B5) at (1.08688, -1.86413);
			\coordinate (B5') at (-1.08688, 1.86413);
			\coordinate (B6) at (-1.417, 0.815692);
			\coordinate (B7) at (-2.12313, 1.41301);

			\draw [very thick,color=gray] (0.,0.) ellipse (3 and 2);
			\draw [very thick,color=gray] (0.,0.) ellipse (2.44949 and 1);

			\draw[thick, gray, dotted, domain=-2:2,smooth,variable=\y]
			plot ({sqrt(3+3/2*\y*\y)},{\y});
			\draw[thick, gray, dotted, domain=-2:2,smooth,variable=\y]
			plot ({-sqrt(3+3/2*\y*\y)},{\y});

			\draw [->,line width=1.5pt] (B1)-- (B2);
			\draw [->,line width=1.5pt] (B2)--(B3);
			\draw [->,line width=1.5pt] (B3')--(B4);
			\draw [->,line width=1.5pt] (B4)--(B5);
			\draw [->,line width=1.5pt] (B5')--(B6);
			\draw [->,line width=1.5pt] (B6)--(B7);

			\draw[black, fill=black]
			(B1) circle (2pt) node[above] {$B_1$};
			\draw[black, fill=black]	(B2) circle (2pt) node[below] {$B_2$};
			\draw[black, fill=black](B3) circle (2pt) node[above] {$B_3$};
			\draw[black, fill=black](B3') circle (2pt) node[below] {$B_3'$};
			\draw[black, fill=black](B4) circle (2pt) node[above] {$B_4$};
			\draw[black, fill=black](B5) circle (2pt) node[below] {$B_5$};
			\draw[black, fill=black](B5') circle (2pt) node[above left] {$B_5'$};
			\draw[black, fill=black](B6) circle (2pt) node[below] {$B_6$};
			\draw[black, fill=black](B7) circle (2pt) node[above] {$B_7$};
			
		\end{scope}

	\end{tikzpicture}
	\caption{Billiard between two ellipses with slipping by half-ellipse along the outer boundary. The dotted curves are caustics.
	}	\label{fig:magic6}
\end{figure}
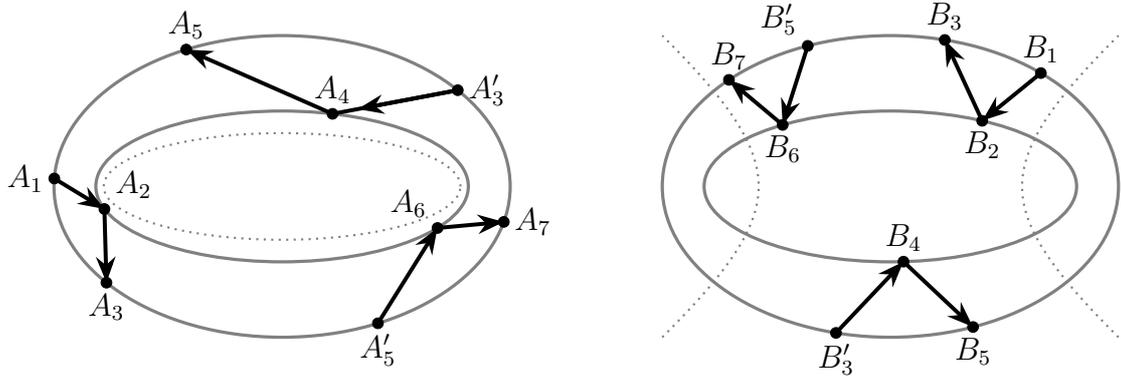

\begin{theorem}\label{th:fom-ann}
The Liouville equivalence class of the magic billiard in an elliptic annulus with slipping by half-ellipse along the outer boundary is given by the Fomenko graph in Figure \ref{fig:fom-ann}.
\end{theorem}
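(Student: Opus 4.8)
The plan is to follow the strategy of the proofs of Theorems \ref{th:fomenko-magic1} and \ref{th:fomenko-magic3}: describe the configuration space, list the confocal conics over which the Liouville foliation bifurcates, identify the Fomenko atom over each of them by counting closed orbits and separatrices, and finally compute the numerical marks by fixing admissible bases on the Liouville tori. By Remark \ref{rem:configuration}, the configuration space is the elliptic annulus with the points of the outer ellipse $\E_1$ identified by the central symmetry and the inner ellipse $\E_2$ left intact; since identifying antipodal points on the boundary of a disk produces the projective plane, this quotient is the projective plane with an open disk removed, i.e.\ a M\"obius band whose only boundary circle is $\E_2$. Writing $\E_2=\C_\gamma$ with $0<\gamma<b$, the first point to establish, exactly as in the two preceding theorems, is that $\C_\gamma$ is \emph{not} a bifurcation value: for $\lambda$ near $\gamma$ the level set remains a union of the same number of Liouville tori, the only change being that for $\lambda>\gamma$ the trajectory additionally reflects off $\E_2$. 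So the bifurcation values of the caustic parameter are only $\lambda=0$ (the ellipse $\E_1$), $\lambda=b$ (the foci) and $\lambda=a$ (the short axis).

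Next I would describe the regular level sets and then the three bifurcations. For an ellipse caustic ($0<\lambda<b$) the level set consists of two Liouville tori, since the central symmetry, being orientation preserving, does not glue the two senses of winding of a trajectory about its caustic; for a hyperbola caustic ($b<\lambda<a$) the level set is a single torus, the ordinary (non-magic) reflection off $\E_2$ joining what in the in-ellipse half-turn billiard of Theorem \ref{th:fomenko-magic3} were the two tori of oppositely directed segments (see Figure \ref{fig:magic6}). Over $\C_0$ the limit motion runs along $\E_1$, with one closed orbit for each of the two winding senses, which the central symmetry does not identify, so $\C_0$ carries two atoms $\mathbf{A}$. Over $\C_a$ the motion takes place along the two short-axis segments lying in the annulus, and the magic half-turn on $\E_1$ together with the ordinary reflection on $\E_2$ splices them into a single closed orbit, so $\C_a$ carries one atom $\mathbf{A}$. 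Over $\C_b$ the trajectories pass through the foci: using the focal reflection property together with the fact that the half-turn interchanges the two foci --- so a magic reflection off $\E_1$ keeps a focal chord through the same focus, while an ordinary reflection off $\E_2$ switches it --- one checks that exactly one focal trajectory is closed, namely the one traversing both long-axis segments, and that every other focal trajectory is a separatrix homoclinic to it, with exactly two such separatrices. Among the complexity-one atoms of \cite{BF2004}, only $\mathbf{B}$ has two separatrices and three boundary tori, consistent with the counts above (two tori for $\lambda<b$, one for $\lambda>b$), so $\C_b$ carries the atom $\mathbf{B}$.

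Assembling the pieces, the rough molecule is the atom $\mathbf{B}$ over $\C_b$, joined on the side $\lambda<b$ to the two atoms $\mathbf{A}$ over $\C_0$ and on the side $\lambda>b$ to the single atom $\mathbf{A}$ over $\C_a$; as a rough molecule this coincides with that of Theorem \ref{th:fomenko-magic1}. To obtain the marks $r$ and $\varepsilon$ on the three edges and the integer $n$ on the family of the $\mathbf{B}$-atom, I would fix admissible coordinate systems on the tori as in \cite{BF2004} --- for an ellipse caustic the torus is glued from copies of the region bounded by $\E_1$, $\E_2$ and the caustic along congruent confocal arcs, just as in the proof of Theorem \ref{th:fomenko-magic1} --- or, alternatively, match this billiard with a glued domain from \cite{Fokicheva2015} and read off its Fomenko-Zieschang invariant. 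I expect the main obstacle to be the analysis over $\C_b$: counting the closed orbits and separatrices and, in particular, verifying that the separatrices are homoclinic, so that the central atom is $\mathbf{B}$ and not --- as for the in-ellipse half-turn billiard of Theorem \ref{th:fomenko-magic3} --- the atom $\mathbf{C_2}$ with four heteroclinic separatrices; and then the bookkeeping of the numerical marks, which is sensitive to the orientation-reversing nature of the M\"obius-band configuration space.
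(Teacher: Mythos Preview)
Your proposal is correct and follows essentially the same approach as the paper: identify the regular level sets (two tori for ellipse caustics, one for hyperbola caustics), determine the atoms at $\lambda=0,b,a$ by counting closed orbits and separatrices, and conclude that the singular level at $\C_b$ is the atom $\mathbf{B}$. Your treatment is in fact somewhat more detailed than the paper's --- you explicitly describe the configuration space as a M\"obius band, you address why $\C_\gamma$ is not a bifurcation value, and you carefully argue that the separatrices are homoclinic --- while the paper, like you, does not actually carry out the computation of the numerical marks in the proof.
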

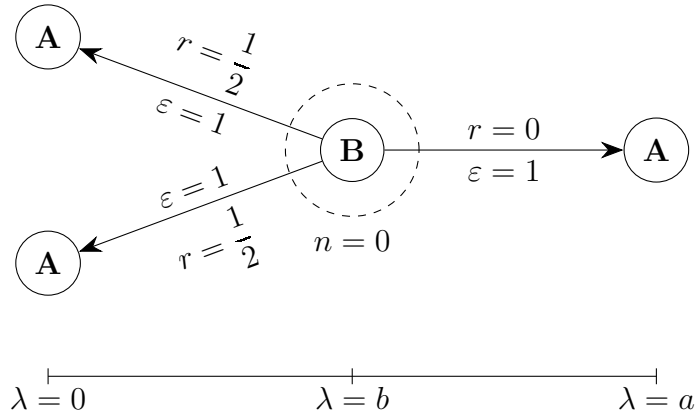
\begin{figure}[h]
	\centering
	\begin{tikzpicture}
		\tikzset{vertex/.style = {shape=circle,draw,minimum size=1.5em}}
		% vertices
		\node[vertex] (aa) at  (8,0) {$\mathbf{A}$};
		\node[vertex] (bb) at  (4,0) {$\mathbf{B}$};
		\node[vertex] (ee) at  (0,1.5) {$\mathbf{A}$};
		\node[vertex] (jj) at  (0,-1.5) {$\mathbf{A}$};
		%edges
		\path[-{Stealth[length=3mm]}] 
		(bb) edge node[above]{$r=\infty$} 
		node[below]{$\varepsilon=1$} (aa) %right edge
		(bb) edge node[above,sloped]{$r=\dfrac12$} 
		node[below,sloped]{$\varepsilon=1$} (ee) % top-left edge
		(bb) edge node[below,sloped]{$r=\dfrac12$} 
		node[above,sloped]{$\varepsilon=1$} (jj); % bottom-left edge
		
%			\draw[black, dashed]
%		(bb) circle (25pt);
%		\node at (4,-1.2) {$n=0$};

		%axes
		\draw (0,-3) -- (8,-3);
		\foreach \x in {0,4,8}
		\draw[shift={(\x,-3)},color=black] (0pt,3pt) -- (0pt,-3pt);
		\node[below] at (0,-3) { $\lambda=0$};
		%	\node[below] at (2,-3) { $\lambda=\beta_2$};
		\node[below] at (4,-3) { $\lambda=b$};
		\node[below] at (8,-3) { $\lambda=a$};
	\end{tikzpicture}
	\caption{Theorem \ref{th:fom-ann}: Fomenko graph for the magic billiard in the annulus with flipping over the long axis.}
	\label{fig:fom-ann}
\end{figure}
\begin{proof}
Any level set corresponding to the trajectories having a fixed ellipse as caustic consists of two Liouville tori: each torus contains the trajectories the winding in one direction around the annulus. 
	On the other hand, if the caustic is hyperbola, there is only one torus.
	
	The level set corresponding to the caustic $\E=\C_0$ contains only two closed trajectories: each is winding in one direction along the boundary.
	Thus, we have two Fomenko atoms of type $\mathbf{A}$ there.
	
	The level set with the caustic $\C_a$ corresponds to the motion along two segments of the short axis which are within the annulus.
	There is only one trajectory on that level set, so it corresponds to the Fomenko atom $\mathbf{A}$.
	
	The level set with the caustic $\C_b$ consists of the trajectories that contain the foci of the ellipse $\E$.
	One of those trajectories is closed and it is placed on the long axis, traversing the two segments.
There are two separatrices on that level set: one separatrix contains the trajectories where the motion is above the long axis on the left and below the axis on the right as $t\to+\infty$, and vice versa for $t\to-\infty$.
The opposite holds for the other separatrix.
We can conclude that the corresponding Fomenko atom is $\mathbf{B}$.
\end{proof}

\begin{remark}
The example of billiard within elliptic annulus with slipping along the outer ellipse was analyzed in \cite{FVZ2021}.
There, the configuration space was defined differently: by gluing two identical annuli along inner and the outer boundaries, so the resulting Fomenko graph there is different.
\end{remark}

\begin{remark}
The billiards within an annulus with flipping over one of the axes on the outer boundary correspond to the topological billiards $\Delta_{\beta}(2B_2'')_{xx}$ and $\Delta_{\beta}(2B_1)_{yy}$ from \cite{Fokicheva2015}.
Together with Remarks \ref{rem:flip} and \ref{rem:halfturn}, and the end of the proof of Theorem \ref{th:fomenko-magic2}, this exemplifies the significance of the Fomenko conjecture, since the systems in Sections \ref{sec:magic-ell} and \ref{sec:magic-ann} are constructed to be integrable.
\end{remark}

\begin{remark}
Various classes of glued sets and billiard systems on them were considered in \cite{Kudr2015}.
\end{remark}

\section{Conclusions and discussion}\label{sec:conclusions}

We note that the magic billiards, as introduced in Section \ref{sec:definition} represents a very broad class, where various subclasses may be of interest for exciting future research.

In particular, integrable cases for magic billiards within ellipse are not exhausted by the list considered in this paper. It would be interesting to explore a more general class of magic billiards $(\E,\varphi,\varphi^*)$, including those which preserve caustic. One natural generalization includes studying domains with piece-wise smooth boundaries. 

We note that billiard ordered games introduced in \cite{DR2004} and studied further in \cite{DGR2022} can also be generalised using magic reflections.

\section*{Acknowledgements} 
This paper is devoted to Academician Anatoly Timofeevich Fomenko on the occasion of his 80-th anniversary.  The authors have been glad to learn a lot from personal contacts with Anatoly Timofeevich and his distinguished students, from their lectures, books, and papers. The authors wish Anatoly Timofeevich many happy returns and further success in his scientific work and the work of his scientific school. 

This research is supported by the Discovery Project No.~DP190101838
\emph{Billiards within confocal quadrics and beyond} from the Australian Research Council, 
by  Mathematical Institute of the Serbian Academy of Sciences and Arts, the Science Fund of Serbia grant Integrability and Extremal Problems in Mechanics, Geometry and
Combinatorics, MEGIC, Grant No.~7744592 and the Ministry for Education, Science, and Technological Development of Serbia and the Simons Foundation grant no.~854861.

The authors are grateful to the anonymous referees for careful reading and comments, which helped improve and clarify the exposition.
M.~R.~is grateful to Professors John Roberts and Wolfgang Schief and the School of Mathematics and Statistics of the UNSW Sydney for their hospitality and support during final stages of work on this paper, and to Max Planck Institute for Mathematics in Bonn for its hospitality and financial support.

\bibliographystyle{amsalpha}
%\nocite{*}
\bibliography{References}

\end{document}